\newtheorem{thm}{\normalfont\scshape Theorem}[section]
\newtheorem{prop}[thm]{\normalfont\scshape Proposition}
\newtheorem{lem}[thm]{\normalfont\scshape Lemma}
\newtheorem{cor}[thm]{\normalfont\scshape Corollary}
\theoremstyle{definition}
\newtheorem{defn}[thm]{\normalfont\scshape Definition}
\theoremstyle{remark}
\newtheorem{rem}[thm]{Remark}
\theoremstyle{remark}
\DeclareMathOperator{\Ell}{Ell}
\begin{document}

\binoppenalty=10000
\relpenalty=10000

\numberwithin{equation}{section}

\newcommand{\uuu}{\mathfrak{u}}
\newcommand{\ccc}{\mathfrak{c}}
\newcommand{\qqq}{\mathfrak{q}}
\newcommand{\ddd}{\mathfrak{d}}
\newcommand{\QQ}{\mathbb{Q}}
\newcommand{\ZZ}{\mathbb{Z}}
\newcommand{\Hilb}{\mathrm{Hilb}}
\newcommand{\CC}{\mathbb{C}}
\newcommand{\PP}{\mathcal{P}}
\newcommand{\Hom}{\mathrm{Hom}}
\newcommand{\Rep}{\mathrm{Rep}}
\newcommand{\MM}{\mathfrak{M}}
\newcommand{\gl}{\mathfrak{gl}}
\newcommand{\ppp}{\mathfrak{p}}
\newcommand{\VV}{\mathcal{V}}
\newcommand{\NN}{\mathbb{N}}
\newcommand{\OO}{\mathcal{O}}
\newcommand{\HH}{\mathcal{H\kern-.44em H}}
\newcommand{\git}{/\kern-.35em/}
\newcommand{\KK}{\mathbb{K}}
\newcommand{\RR}{\mathbb{R}}
\newcommand{\Proj}{\mathrm{Proj}}
\newcommand{\quot}{\mathrm{quot}}
\newcommand{\core}{\mathrm{core}}
\newcommand{\Sym}{\mathrm{Sym}}
\newcommand{\FF}{\mathbb{F}}
\newcommand{\Span}[1]{\mathrm{span}\{#1\}}
\newcommand{\UTor}{U_{\qqq,\ddd}(\ddot{\mathfrak{sl}}_r)}
\newcommand{\Sss}{\mathcal{S}}
\newcommand{\TT}{\mathcal{T}}

\renewcommand{\ge}{\geqslant}
\renewcommand{\le}{\leqslant}

\title{Ext operators for wreath Macdonald polynomials}
\author{Seamus Albion Ferlinc}
\author{Joshua Jeishing Wen}
\address{Fakult\"{a}t f\"{u}r Mathematik, Universit\"{a}t Wien, Vienna, Austria}
\email{seamus.albion@univie.ac.at}
\email{joshua.jeishing.wen@univie.ac.at}

\keywords{wreath Macdonald polynomials, ext operators, Nekrasov--Okounkov formulae}
\subjclass[2020]{Primary: 05E05, 33D52; Secondary: 14D21, 17B69.}

\begin{abstract}
We introduce a wreath Macdonald polynomial analogue of the Carlsson--Nekrasov--Okounkov vertex operator.
As an application, we prove a modular $(q,t)$-Nekrasov--Okounkov formula for $r\ge 3$ originally conjectured by Walsh and Warnaar.
\end{abstract}
\maketitle

\section{Introduction}

\subsection{Ext operators}\label{ExtIntro}
The term ``Ext operator'' comes from the geometry of moduli of sheaves on surfaces, which will not appear in the main body of the paper. 
We elaborate here on these geometric origins; the discussion extends to more general moduli of sheaves on more general surfaces, but we will confine ourselves to the Hilbert scheme of points on $\CC^2$.
Namely, for each $n$, we have the following identification of closed points:
\[
\Hilb_n(\CC^2)\cong\left\{ \mathcal{I}\subset\CC[x,y]\, |\, \dim \CC[x,y]\big/\mathcal{I}=n \right\}.
\]
Thus, $\Hilb_n(\CC^2)$ parameterizes certain ideal sheaves.
We set
\[
\Hilb\coloneq \bigsqcup_{n} \Hilb_n(\CC^2).
\]

After choosing coordinates, the two-dimensional torus $T$ acts naturally on $\CC^2$.
Given two equivariant sheaves $\mathcal{F}$ and $\mathcal{G}$ on $\CC^2$, one can define their $T$-equivariant $K$-theoretic Euler characteristic
\[
\chi_{\CC^2}(\mathcal{F},\mathcal{G})=\sum_{i}(-1)^i\mathrm{Ext}^i_{\CC^2}(\mathcal{F},\mathcal{G}).
\]
Carlsson, Nekrasov, and Okounkov \cite{COExt,CNO} consider the virtual class $E$ of vector bundles on $\Hilb\times\Hilb$ defined as follows: at the point corresponding to a pair of ideals $(\mathcal{I}_1,\mathcal{I}_2)\in\Hilb\times\Hilb$, the fiber of $E$ is
\[
E(\mathcal{I}_1,\mathcal{I}_2)=\chi_{\CC^2}(\mathcal{O}_{\CC^2},\mathcal{O}_{\CC^2})-\chi_{\CC_2}(\mathcal{I}_1,\mathcal{I}_2).
\]
The first summand $\chi_{\CC^2}(\mathcal{O}_{\CC^2},\mathcal{O}_{\CC^2})$ is just a normalization term; we call $E$ the \textit{Ext bundle}.

Lifting the $T$-action on $\CC^2$ naturally to $\Hilb$, the torus fixed points correspond to monomial ideals $\{\mathcal{I}_\lambda\}$, which can be indexed by partitions.
In \cite{COExt}, the authors compute
\begin{equation}
\sum_{k}(-u)^k {\textstyle\bigwedge^k} E(\mathcal{I}_\lambda,\mathcal{I}_\mu)=
\prod_{\square\in\lambda}
\left( 1-uq^{-a_\mu(\square)}t^{l_\lambda(\square)+1} \right)
\prod_{\square\in\mu}
\left( 1-uq^{a_\lambda(\square)+1}t^{-l_\mu(\square)} \right).
\label{ExtNek}
\end{equation}
Here, $a_\mu(\square)$ and $l_\mu(\square)$ are the (generalized) arm and leg lengths of the box; we define these quantities in Subsection~\ref{Young}.
The product on the right-hand-side of \eqref{ExtNek} is the \textit{Nekrasov factor}, a combinatorial quantity that has played a large role in the AGT correspondence \cite{AGT, AGTNotes}.

Either from Haiman's celebrated proof of the Macdonald positivity conjecture \cite{HaimanPoly} or from Grojnowski and Nakajima's Heisenberg action \cite{GrojHilb, NakHilbAnnals}, it is well-known that one should study cohomology or $K$-theory of $\Hilb$ in terms of the ring of symmetric functions $\Lambda$ \cite{Mac}.
In $K$-theory, the class of the skyscraper sheaf $[\mathcal{O}_{\mathcal{I}_\lambda}]$ corresponds to the \textit{modified Macdonald polynomial} $H_\lambda$.
One of the key results from \cite{COExt, CNO} is a description of the left-hand-side of (\ref{ExtNek}) in terms of natural operators on symmetric functions.
Thus, they are able to reproduce (\ref{ExtNek}) using certain operators applied to $H_\lambda$ and $H_\mu$.

\subsection{Wreath Macdonald polynomials}
This paper is concerned with a generalization of \cite{CNO} where one replaces the Macdonald polynomials with \textit{wreath Macdonald polynomials}.
Recall that under the Frobenius characteristic, we can use $\Lambda$ to describe the representation rings of symmetric groups $\Sigma_n$:
\[
\Lambda\cong\bigoplus_{n}\mathrm{Rep}(\Sigma_n).
\]
Defined by Haiman \cite{Haiman}, the wreath Macdonald polynomials are characters for the wreath product $\ZZ/r\ZZ\wr\Sigma_n$.
Fixing $r$ and letting $n$ vary, we also have a wreath Frobenius characteristic \cite[Appedix I.B]{Mac}:
\[
\Lambda^{\otimes r}\cong\bigoplus_{n}\mathrm{Rep}(\ZZ/r\ZZ\wr\Sigma_n).
\]
The wreath Macdonald polynomials are also indexed by (single) partitions; we will denote them again by $\{H_\lambda\}$.
The decomposition of a partition into its $r$-core and $r$-quotient (cf.~Subsection~\ref{CoreQuot} below) plays a role in their definition, and letting $\lambda$ range over all of those with the same $r$-core, we obtain a basis of $\CC(q,t)\otimes \Lambda^{\otimes r}$.

The geometric story told in Subsection~\ref{ExtIntro} above also has a wreath analogue.
By work of Gordon \cite{Gordon} and Bezrukavnikov and Finkelberg \cite{BezFink} (cf.~\cite{Haiman} as well), the wreath Macdonald polynomials are also intimately tied to the $K$-theory of $\Hilb$.
Namely, consider the action of $\ZZ/r\ZZ$ on $\CC^2$ via matrices
\[
\begin{pmatrix}
e^{\frac{k\pi i}{r}} & 0\\
0 & e^{-\frac{k\pi i}{r}}
\end{pmatrix}.
\]
Lifting this action to $\Hilb$, one can consider the fixed subvarieties $\Hilb^{\ZZ/r\ZZ}$.
These are still smooth, and they still contain the monomial ideals $\{\mathcal{I}_\lambda\}$.
The aforementioned work assign the class of the skyscraper sheaf $\left[ \mathcal{O}_{\mathcal{I}_\lambda} \right]$, now in $K_T( \Hilb^{\ZZ/r\ZZ})$, to $H_\lambda$.

Restricted to $\Hilb^{\ZZ/r\ZZ}\times\Hilb^{\ZZ/r\ZZ}$, the Ext bundle $E$ becomes a bundle of $\ZZ/r\ZZ$-modules, and we can take its invariants $E^{\ZZ/r\ZZ}$. 
Equation (\ref{ExtNek}) becomes
\[
\sum_{k}(-u)^k {\textstyle\bigwedge^k} \bigg(E(\mathcal{I}_\lambda,\mathcal{I}_\mu)^{\ZZ/r\ZZ}\bigg)= N_{\lambda,\mu}(u),
\]
where 
\begin{equation}\label{Eq_NF}
N_{\lambda,\mu}(u)\coloneq\prod_{\substack{\square\in\lambda\\ h_{\mu,\lambda}(\square)\equiv 0\:\mathrm{mod}\: r}}
\left( 1-uq^{-a_\mu(\square)}t^{l_\lambda(\square)+1} \right)
\prod_{\substack{\square\in\mu\\ h_{\lambda,\mu}(\square)\equiv 0\:\mathrm{mod}\: r}}
\left( 1-uq^{a_\lambda(\square)+1}t^{-l_\mu(\square)} \right).
\end{equation}
Here, the quantity $h_{\mu,\lambda}(\square)$ is the mixed hook length, defined in Subsection~\ref{Young}.
The expression
$N_{\lambda,\mu}(u)$ is the Nekrasov factor for ALE space, considered in \cite[Equation~(5.33)]{AKMMSZ}.

Our main result is the following.

\begin{thm}\label{MainThm}
Let $r\ge 3$.
Define
\[
\mathsf{W}(u)
\coloneq\Omega\left[ \frac{(1-u^{-1})X^{(0)}}{(1-q\sigma^{-1})(t\sigma-1)}\right]
\TT\left[ (1-uqt)X^{(0)} \right].
\]
For $\lambda$ and $\mu$ with the same $r$-core, we have
\[
\left\langle H_\mu^\dagger, \mathsf{W}(u)H_\lambda\right\rangle_{q,t}'=u^{-|\quot(\mu)|}N_{\lambda,\mu}(u).
\]
\end{thm}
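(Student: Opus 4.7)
The plan is to evaluate the matrix coefficient $\langle H_\mu^\dagger, \mathsf{W}(u)H_\lambda\rangle_{q,t}'$ by exploiting the factored form $\mathsf{W}(u)=\Omega[\cdots]\,\TT[\cdots]$, following the blueprint of Carlsson--Nekrasov--Okounkov adapted to the wreath setting. I expect both factors to admit a tractable description relative to the wreath Macdonald basis: the multiplication $\Omega\big[\tfrac{(1-u^{-1})X^{(0)}}{(1-q\sigma^{-1})(t\sigma-1)}\big]$ should arise as a specialization of the Cauchy-type kernel for the pairing $\langle\cdot,\cdot\rangle_{q,t}'$, with the $\sigma$-twists in the denominator implementing the passage from $\Lambda$ to the wreath algebra $\Lambda^{\otimes r}$, while $\TT[(1-uqt)X^{(0)}]$ should be a plethystically shifted vertex-type operator whose matrix coefficients between $H_\lambda$ and $H_\mu$ are computable in closed form.

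Concretely, my first step would be to expand $\TT[(1-uqt)X^{(0)}]H_\lambda$ in the wreath Macdonald basis and read off its coefficient against $H_\mu^\dagger$, using either a power-sum realization of $\TT$ or an action through a relevant quantum algebra such as $\UTor$. The expected output is a product over boxes of $\mu$, with the residue condition $h_{\lambda,\mu}(\square)\equiv 0\bmod r$ producing the factor $\prod(1-uq^{a_\lambda(\square)+1}t^{-l_\mu(\square)})$ of $N_{\lambda,\mu}(u)$. The second step is to handle the multiplication by $\Omega[\cdots]$ via a Cauchy identity for wreath Macdonald polynomials, roughly of the form $\Omega\big[\tfrac{XY}{(1-q\sigma^{-1})(t\sigma-1)}\big]=\sum_{\lambda}H_\lambda(X)\,H_\lambda^\dagger(Y)$. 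Plethystically substituting the specific argument $(1-u^{-1})X^{(0)}$ should then yield the complementary $\lambda$-product of $N_{\lambda,\mu}(u)$ together with the overall normalization $u^{-|\quot(\mu)|}$ that records the total $r$-quotient size.

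Assembling the two computations, the pairing factors into the product of the two box contributions, which should combine to give precisely $u^{-|\quot(\mu)|}N_{\lambda,\mu}(u)$. The main technical obstacle is the $\sigma$-bookkeeping: the plethystic exponentials a priori produce unrestricted box products, and it is the $\sigma$ in the denominator that implements the $\ZZ/r\ZZ$-averaging and enforces the residue conditions $h_{\mu,\lambda}(\square),\,h_{\lambda,\mu}(\square)\equiv 0\bmod r$ appearing in $N_{\lambda,\mu}(u)$. Correctly extracting these conditions, and verifying that the surviving $u$-weights sum to $-|\quot(\mu)|$ rather than, say, to $-|\mu|$, is the delicate part of the argument; the hypothesis $r\ge 3$ likely enters at this stage, as the $r=2$ case can exhibit degeneracies among mixed hooks that require separate combinatorial treatment.
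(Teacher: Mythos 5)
Your plan has a genuine gap: it rests on two inputs that are not available and that, in the logical structure of this subject, sit downstream of the theorem rather than upstream of it. You propose to expand $\TT[(1-uqt)X^{(0)}]H_\lambda$ in the wreath Macdonald basis in closed form as a box product over $\mu$, and separately to control multiplication by $\Omega\big[\tfrac{(1-u^{-1})X^{(0)}}{(1-q\sigma^{-1})(t\sigma-1)}\big]$ via a Cauchy identity. These are precisely Pieri-type rules for wreath Macdonald polynomials; no such closed formulas are known as independent statements (the paper stresses that manicured wreath Pieri rules remain open), and the two special cases that do admit product formulas (Propositions \ref{SPieri} and \ref{MPieri}) are derived as corollaries of Theorem \ref{MainThm}, so taking them as inputs would be circular. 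Moreover, the anticipated split of $N_{\lambda,\mu}(u)$ into ``the $\mu$-product from $\TT$'' times ``the $\lambda$-product from $\Omega$'' does not occur: the matrix coefficients of either factor of $\mathsf{W}(u)$ alone are not individually products of that shape, and in the actual argument the entire Nekrasov factor emerges at once from a single plethystic exponential.

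The missing idea is the Tesler identity of \cite{RW}. One writes $\mathsf{W}(u)$, up to conjugation by $u^D$ and the explicit scalar $\Omega\big[\big(\tfrac{uqt}{(1-q)(1-t)}\big)^{(0)}\big]$, as the composition $(\mathsf{V}_\alpha^*)^\dagger u^D\mathsf{V}_\alpha$ (the nabla operators cancel), where $\mathsf{V}_\alpha H_\lambda=\mathbb{E}_\lambda$ and $\mathsf{V}_\alpha^* H_\mu^\dagger=\mathbb{E}_\mu^*$ are plethystic exponentials acting as delta functions for $\langle-,-\rangle_{q,t}'$. The pairing then collapses to $\Omega\big[\big(\tfrac{uqt\,D_\lambda\overline{D}_\mu}{(1-q)(1-t)}\big)^{(0)}\big]$, and the Nekrasov factor is recovered from the Carlsson--Okounkov/Mellit identity $E_{\lambda,\mu}=-qt\big(D_\lambda\overline{D}_\mu-1\big)/\big((1-q)(1-t)\big)$; the residue conditions $h_{\lambda,\mu}(\square)\equiv 0\bmod r$ come from extracting the color-$0$ component of this character, not from any $\ZZ/r\ZZ$-averaging inside a Pieri rule. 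Finally, your guess about where $r\ge 3$ enters is off: it is not a combinatorial degeneracy of mixed hooks at $r=2$, but simply that the quantum toroidal results underlying the Tesler identity are only established for $r\ge 3$.
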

\noindent All the notation in the theorem is introduced in Sections \ref{Partitions} and \ref{WMT}.
Our formula should also hold for $r=2$.
The only obstruction is that the results from \cite{WreathEigen} involving the quantum toroidal algebra are only proved for $r\ge 3$; the algebra for $r=2$ has different formulas.

Our method of proof is philosophically identical to \cite{CNO}.
Although not obvious at first, it turns out that Theorem \ref{MainThm} is intimately tied to the reciprocity property for Macdonald polynomials.
In \cite{CNO}, this is manifested in their use of the \textit{Cherednik--Macdonald--Mehta identity} \cite{ChereMM}.
Here, we phrase this in terms of \textit{Tesler's identity} \cite{GarsiaTesler, GHT}, which was proven for wreath Macdonald polynomials by Romero and the second author \cite{RW}.

\subsection{Nekrasov--Okounkov formulas}
The ordinary Nekrasov--Okounkov formula is the following expansion for an arbitrary complex power of the Dedekind eta function:
\begin{equation}\label{Eq_NO}
\sum_{\lambda}T^{|\lambda|}\prod_{\square\in\lambda}
\bigg(1-\frac{z}{h_\lambda^2(\square)}\bigg)
=\prod_{i\ge 1}(1-T^i)^{z-1}.
\end{equation} 
Note that $h_\lambda(\square)=h_{\lambda,\lambda}(\square)$ is the ordinary hook length of the box $\square$.
One may either take $0<T<1$ and $z\in\CC$ or $T\in\CC$ with $|T|<1$ and $z\in\RR$.
This was discovered by Nekrasov and Okounkov \cite[Equation~(6.12)]{NO} in the course of their proof of Nekrasov's conjecture \cite{Nekrasov04}. 
It was found independently by Westbury as a hook-length formula for the D'Arcais polynomials \cite{Westbury06}.

Generalizations of the Nekrasov--Okounkov formula abound in the literature;
see, for instance, \cite{CRV,DH11,HanNO,HJ11,Petreolle16,Petreolle16II,RainsWar16,Wahiche22,WalshWar} and references therein.
One such generalization is Han's modular version wherein the product in the summand is replaced by a product over boxes with hook length congruent to zero modulo $r$ for a positive integer $r$ \cite[Theorem~1.4]{HanNO}.
This was further expanded on by Han and Ji by way of their so-called ``multiplication-addition theorem'' \cite{HJ11} which allows for modular analogues of a vast swathe of identities involving hook-lengths.

In a different direction, Rains and Warnaar \cite[Theorem~1.3]{RainsWar16} and Carlsson and Rodriguez Villegas \cite[Theorem~1.0.2]{CRV} independently proved a $(q,t)$-deformation of the Nekrasov--Okounkov formula. 
To state this, let
\[
(a_1,\ldots, a_n; q_1,\ldots, q_m)_\infty
=
\prod_{i=1}^n\prod_{j=1}^m\prod_{k=0}^\infty \big( 1-a_iq_j^k \big)
\]
denote the infinite multiple $q$-Pochhammer symbol.
Then 
\begin{equation}\label{Eq_qtNO}
\sum_{\lambda}T^{|\lambda|}\prod_{\square\in\lambda}
\frac{(1-uq^{a_\lambda(\square)+1}t^{l_\lambda(\square)})
(1-u^{-1}q^{a_\lambda(\square)}t^{l_\lambda(\square)+1})}
{(1-q^{a_\lambda(\square)+1}t^{l_\lambda(\square)})
(1-q^{a_\lambda(\square)}t^{l_\lambda(\square)+1})}
=\frac{(uqT,u^{-1}tT;q,t,T)_\infty}{(T,tT;q,t,T)_\infty}.
\end{equation}
For $q=t$ this reduces to a $q$-analogue of the Nekrasov--Okounkov formula which may be found in \cite{DH11} or \cite{INRS12}.
Further setting $u=q^z$ and taking the limit $q\to 1$ then recovers 
\eqref{Eq_NO}.
Alternatively, by specializing $u=(t/q)^{1/2}$ and then replacing $(q,t)\mapsto(q^{-2},t^2)$ one obtains an identity conjectured by Hausel and Rodriguez Villegas \cite[Conjecture~4.3.2]{HRV} (our $(q,t)$ is their $(w,z)$).
This in turn is equivalent to the genus-one case of their much more general conjecture \cite[Conjecture~4.2.1]{HRV}. 
This remarkable conjecture gives an explicit expression for the mixed Hodge polynomial of the twisted character variety of a closed Riemann surface of genus $g$ in terms of a plethystic logarithm involving a generalized hook-product.

The $g=1$ case of this general conjecture is what motivated Rains and Warnaar and Carlsson and Rodriguez Villegas to prove the $(q,t)$-Nekrasov--Okounkov formula \eqref{Eq_qtNO}.
Their proofs are built in different ideas.
Rains and Warnaar's approach is based on evaluating a sum of specialized ordinary (i.e., non-modified) skew Macdonald polynomials in two different ways.
On the other hand, Carlsson and Rodriguez Villegas show that \eqref{Eq_qtNO} may be obtained by taking the trace of the Carlsson--Nekrasov--Okounkov vertex operator of which our $\mathsf{W}(u)$ is the wreath analogue.
In particular, the modified Macdonald polynomial case of Theorem~\ref{MainThm}, stated as Theorem~3.0.1 of \cite{CRV}, is key in their proof.

By following Carlsson and Rodriguez Villegas and taking the trace of $\mathsf{W}(u)$ we obtain the following modular refinement of \eqref{Eq_qtNO} which was already conjectured by Walsh and Warnaar \cite[Conjecture~8.1]{WalshWar}.
\begin{thm}\label{Thm_WW}
Let $r\ge 3$ be an integer. Then for any $r$-core $\alpha$, we have
\begin{multline}\label{WW1}
\sum_{\substack{\lambda\\ \core(\lambda)=\alpha}}T^{|\quot(\lambda)|}
\prod_{\substack{\square\in\lambda\\ h_\lambda(\square)\equiv 0\:\mathrm{mod}\: r}}
\frac{(1-uq^{a_\lambda(\square)+1}t^{l_\lambda(\square)})(1-u^{-1}q^{a_\lambda(\square)}t^{l_\lambda(\square)+1})}{(1-q^{a_\lambda(\square)+1}t^{l_\lambda(\square)})(1-q^{a_\lambda(\square)}t^{l_\lambda(\square)+1})} \\
=\frac{1}{(T;T)_\infty^r}\prod_{i=1}^r\frac{(uq^it^{r-i}T,u^{-1}q^{r-i}t^iT;q^r,t^r,T)_\infty}{(q^it^{r-i}T,q^{r-i}t^iT;q^r,t^r,T)_\infty}.
\end{multline}
\end{thm}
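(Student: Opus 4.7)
The plan is to emulate the argument of Carlsson and Rodriguez Villegas \cite{CRV} for the ordinary $(q,t)$-Nekrasov--Okounkov formula \eqref{Eq_qtNO} by computing the trace of $T^{\mathrm{deg}}\mathsf{W}(u)$ on the subspace $\mathcal{F}_\alpha\subset\CC(q,t)\otimes\Lambda^{\otimes r}$ spanned by the wreath Macdonald polynomials $\{H_\lambda:\core(\lambda)=\alpha\}$, where $\mathrm{deg}$ is the grading operator $H_\lambda\mapsto|\quot(\lambda)|\,H_\lambda$. The LHS of \eqref{WW1} will emerge as the spectral side of this trace via Theorem~\ref{MainThm}, while the RHS will come from a direct bosonic computation.

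\textbf{Spectral side.} With $\{H_\lambda^\dagger\}$ the basis dual to $\{H_\lambda\}$ under $\langle\cdot,\cdot\rangle'_{q,t}$ (or appropriately rescaled), the trace expands as
\[
\mathrm{tr}_{\mathcal{F}_\alpha}\bigl(T^{\mathrm{deg}}\mathsf{W}(u)\bigr)=\sum_{\core(\lambda)=\alpha}T^{|\quot(\lambda)|}\,\langle H_\lambda^\dagger,\mathsf{W}(u)H_\lambda\rangle'_{q,t},
\]
which by Theorem~\ref{MainThm} at $\mu=\lambda$ equals $\sum_{\core(\lambda)=\alpha}T^{|\quot(\lambda)|}u^{-|\quot(\lambda)|}N_{\lambda,\lambda}(u)$. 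A combinatorial manipulation of the diagonal Nekrasov factor $N_{\lambda,\lambda}(u)$---pulling out appropriate monomials from its two defining products and using that the number of $r$-divisible hooks of $\lambda$ equals $|\quot(\lambda)|$---rewrites each summand, up to a renaming of $T$ that absorbs the prefactor $u^{-|\quot(\lambda)|}$, in the exact form appearing on the LHS of \eqref{WW1}.

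\textbf{Geometric side.} Under the wreath bosonization implicit in $\Lambda^{\otimes r}\cong\bigoplus_n\Rep(\ZZ/r\ZZ\wr\Sigma_n)$, the subspace $\mathcal{F}_\alpha$ is identified with a tensor product of $r$ bosonic Fock spaces, one for each $i\in\ZZ/r\ZZ$ of the $r$-quotient (the core $\alpha$ contributing only an overall normalization). The variable $\sigma$ in $\mathsf{W}(u)$ effects precisely this $r$-fold decomposition of $X^{(0)}$, so each plethystic factor of $\mathsf{W}(u)=\Omega[\cdots]\TT[\cdots]$ becomes a product of Heisenberg vertex operators across the $r$ colors. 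Applying the standard bosonic trace formula
\[
\mathrm{tr}\bigl(T^{\mathrm{deg}}\,e^{\sum_n A_n a_n^\dagger}e^{\sum_n B_n a_n}\bigr)=\prod_{n\ge 1}\frac{1}{1-T^n}\exp\!\Bigl(\sum_{n\ge 1}\frac{nA_nB_nT^n}{1-T^n}\Bigr)
\]
to each color and collecting the resulting exponentials into triple products reassembles the $r$-fold product on the RHS of \eqref{WW1}, with the $(T;T)_\infty^{-r}$ prefactor coming from the $r$ free-boson contributions.

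\textbf{Main obstacle.} The hardest step will be expressing $\TT[(1-uqt)X^{(0)}]$ as an explicit product of Heisenberg exponentials in the $r$-colored bosonic variables $\{p_n^{(i)}\}_{i=0}^{r-1}$. Unlike $\Omega[\cdots]$, the operator $\TT$ is introduced through the quantum toroidal action of \cite{WreathEigen} and is not manifestly plethystic in these variables; bringing it into normal order against the preceding $\Omega$ factor while tracking the shift by the core $\alpha$ will require careful use of the commutation relations from \cite{WreathEigen}. Verifying that all $\alpha$-dependence drops out of the final trace---so that the RHS depends on $\alpha$ only through the index set of the sum on the LHS of \eqref{WW1}---will be an important consistency check.
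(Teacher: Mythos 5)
Your overall strategy---take the graded trace of $\mathsf{W}(u)$ over the span of $\{H_\lambda:\core(\lambda)=\alpha\}$, read off the sum side from the diagonal matrix elements and the product side from a bosonic trace formula---is exactly the route the paper takes. But as written your spectral side has a genuine gap. The trace in the basis $\{H_\lambda\}$ picks out the coefficient of $H_\lambda$ in $\mathsf{W}(u)H_\lambda$, which is $\langle H_\lambda^\dagger,\mathsf{W}(u)H_\lambda\rangle_{q,t}'\big/\langle H_\lambda^\dagger,H_\lambda\rangle_{q,t}'$ and not the unnormalized pairing: Proposition~\ref{DaggerProp} only gives orthogonality, and Corollary~\ref{Norm} computes the norm to be $N_{\lambda,\lambda}(1)$. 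This denominator is not a cosmetic rescaling; it is precisely what produces the factors $(1-q^{a_\lambda(\square)+1}t^{l_\lambda(\square)})(1-q^{a_\lambda(\square)}t^{l_\lambda(\square)+1})$ in the denominator of the summand of \eqref{WW1}. The quantity you wrote down, $\sum_\lambda T^{|\quot(\lambda)|}u^{-|\quot(\lambda)|}N_{\lambda,\lambda}(u)$, is a sum of Laurent polynomials with no denominators, and no amount of monomial-pulling or renaming of $T$ will turn it into the left-hand side of \eqref{WW1}. Once you insert the ratio $N_{\lambda,\lambda}(u)/N_{\lambda,\lambda}(1)$ the rewriting does go through (extracting a monomial from each factor of the form $(1-u^{-1}q^{a}t^{-l-1})/(1-q^{a}t^{-l-1})$ accounts for $u^{-|\quot(\lambda)|}$ via Proposition~\ref{QuotHook}), with the caveat that the matrix elements naturally produce \eqref{WW1} with $t$ inverted, so a final substitution $t\mapsto t^{-1}$ on both sides is needed.

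On the product side your plan is fine, but the ``main obstacle'' you flag is not there. The operator $\TT[(1-uqt)X^{(0)}]$ is by definition $\exp\bigl(\sum_{k>0}p_k^\perp[(1-uqt)X^{(0)}]/k\bigr)$ (Subsection~\ref{OT}), i.e., it is already a Heisenberg exponential in the annihilation operators $p_k^\perp[X^{(i)}]$; nothing from the quantum toroidal algebra is needed to normal-order it against the $\Omega$ factor---that is exactly Lemma~\ref{OTComm}, and the trace then follows from the recursion $\mathrm{Tr}(z)=\Omega[\cdots]\,\mathrm{Tr}(Tz)$, which is your bosonic trace formula. Likewise the $\alpha$-independence requires no verification: for every $r$-core $\alpha$ the set $\{H_\lambda:\core(\lambda)=\alpha\}$ is a basis of all of $\Lambda_{q,t}^{\otimes r}$, so your $\mathcal{F}_\alpha$ is not a proper subspace and the operator trace being computed is literally the same for every $\alpha$. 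The only remaining work is the elementary reduction of the plethystic exponent $-\frac{T(1-u^{-1})(1-uqt)}{1-T}\bigl(\tfrac{1}{(1-q\sigma^{-1})(1-t\sigma)}\bigr)_{0,0}$ to $-\frac{T(1+qt-u^{-1}-uqt)}{(1-T)(1-q^r)(1-t^r)}\sum_{i=0}^{r-1}q^it^i$ via Lemma~\ref{PlethInv} and its reassembly into the stated triple products, which you correctly describe as routine.
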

Again, as for Theorem~\ref{MainThm}, we expect this to hold also in the case $r=2$.
Replacing $T\mapsto ST^r$, scaling both sides by $T^{|\alpha|}$ and summing over all $r$-cores using the generating function \cite[p.~13]{Mac}
\[
\sum_\alpha T^{|\alpha|}=\frac{(T^r;T^r)_\infty^r}{(T;T)_\infty},
\]
we obtain a weaker form of the theorem stated in \cite[Conjecture~1.2]{WalshWar}:
\begin{multline*}
\sum_{\lambda}S^{|\quot(\lambda)|}T^{|\lambda|}
\prod_{\substack{\square\in\lambda\\ h_\lambda(\square)\equiv 0\:\mathrm{mod}\: r}}
\frac{(1-uq^{a_\lambda(\square)+1}t^{l_\lambda(\square)})(1-u^{-1}q^{a_\lambda(\square)}t^{l_\lambda(\square)+1})}{(1-q^{a_\lambda(\square)+1}t^{l_\lambda(\square)})(1-q^{a_\lambda(\square)}t^{l_\lambda(\square)+1})} \\
=\frac{(T^r;T^r)_\infty^r}{(T;T)_\infty(ST^r;ST^r)_\infty^r}\prod_{i=1}^r\frac{(uq^it^{r-i}ST^r,u^{-1}q^{r-i}t^iST^r;q^r,t^r,ST^r)_\infty}{(q^it^{r-i}ST^r,q^{r-i}t^iST^r;q^r,t^r,ST^r)_\infty}.
\end{multline*}
Walsh and Warnaar conjectured \eqref{WW1} based on the $q=0$ and $t=0$ cases,
combinatorial identities involving products over boxes with arm- and leg-length
zero, respectively.
These special cases are established purely combinatorially using a variant of the core-quotient construction and an analogue of the ``multiplication theorem'' of Han and Ji \cite{HJ11}.
However, it does not appear that there is such a nice combinatorial explanation for these modular variants at the $(q,t)$-level.

\subsection{Outline of the paper}
The paper reads as follows.
In the next section we cover the basics of partitions, the core-quotient construction and plethystic exponentials. Section~\ref{WMT} is then devoted to wreath Macdonald polynomial theory, including the key results from \cite{RW} which we require for our main construction.
Finally, in Section~\ref{ExtOp}, we introduce the Ext operator, prove both Theorems~\ref{MainThm} and \ref{Thm_WW}, and comment on an elliptic analogue of the latter stemming from the work of Walsh and Warnaar.

\section{Partitions}\label{Partitions}
Here we cover the basics of partitions and the core-quotient construction,
following closely \cite{WreathEigen}; see also \cite[p.~12--15]{Mac}.

\subsection{Basic notation}

A partition $\lambda=(\lambda_1,\lambda_2,\lambda_3,\ldots)$ is a nonincreasing sequence of nonnegative integers with only finitely many nonzero entries:
\[
\lambda_1\ge \lambda_2\ge\lambda_3\ge\cdots\ge 0.
\]
The sum of the entries is well-defined; we denote it by $|\lambda|=\sum_{i\ge1}\lambda_i$.
The number of nonzero $\lambda_i$ is called the length and is denoted by
$\ell(\lambda)$.
Later on, given a positive integer $r$, we will also work with $r$-tuples of partitions $\vec{\lambda}=(\lambda^{(0)},\ldots, \lambda^{(r-1)})$.
Entries in the tuple will be indexed using superscripts, while entries of an individual partition will be indexed by subscripts.
We extend the notation $|\vec{\lambda}|=\sum_i|\lambda^{(i)}|$.
Finally, we use $\ge$ to denote the dominance order on partitions of a fixed integer $n$: $\lambda\ge\mu$ if for all $k\ge 1$,
\[
\lambda_1+\cdots+\lambda_k\ge\mu_1+\cdots+\mu_k.
\]

\subsection{Diagrams}
Here, we review two ways to visualize a partition: through its \textit{Young diagram} and its \textit{Maya diagram}.

\subsubsection{Young diagrams}\label{Young}
For a partition $\lambda$, its Young diagram is the following set of lattice points:
\[
\{ (a,b)\in\ZZ^2 \mid 0\le a\le\lambda_b-1 \}.
\]
To each lattice point, we assign a box; we will draw the resulting arrangement of boxes following the French convention.
For example, the Young diagram of $\lambda=(6, 4, 1)$ is shown in Figure \ref{fig:young} below.
Note that the bottom left corner has coordinate $(0,0)$.
We will conflate a partition with its Young diagram and make statements like $(a,b)\in\lambda$.
\begin{figure}[ht]
\centering
\begin{tikzpicture}[scale=0.5]
\foreach \i [count=\ii] in {6,4,1}
\foreach \j in {1,...,\i}{\draw (\j,1+\ii) rectangle (\j+1,\ii);}
\end{tikzpicture}
\caption{The Young diagram of the partition $\lambda=(6,4,1)$.}
\label{fig:young}
\end{figure}
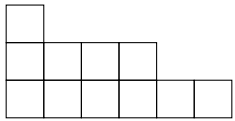

This visual representation clarifies some of the basic definitions concerning partitions.
The \textit{transpose} or \textit{conjugate} partition ${}^t\lambda=({}^t\lambda_1,{}^t\lambda_2,\ldots)$ is defined to be
\[
{}^t\lambda_i=\#\{1\le j\le\ell(\lambda) \mid \lambda_j\ge i\}.
\]
Its Young diagram is just the one for $\lambda$ reflected across the diagonal line $y=x$.
For a box $\square=(a,b)\in\ZZ_{\ge 0}^2$ (not necessarily in $\lambda$), we define its \emph{arm-} and \emph{leg-length} by
\[
a_\lambda(\square)\coloneq\lambda_{b+1}-a-1 \quad\text{and}\quad
l_\lambda(\square)\coloneq{}^t\lambda_{a+1}-b-1.
\]
When $\square\in\lambda$, these quantities are easily visualized in the Young diagram: they count the number of boxes to the right of and above $\square$, respectively.
For a pair of partitions $\lambda,\mu$ and a box $\square$ lying in one of them we define the \emph{mixed hook-length} by
\[
h_{\lambda,\mu}(\square)\coloneq a_\lambda(\square)+l_\mu(\square)+1.
\]
The ordinary hook-length of a box $\square\in\lambda$ is then $h_{\lambda,\lambda}(\square)=h_{\lambda}(\square)$.
Finally, the \textit{content} of a box $\square=(a,b)$ is $c_\square:=b-a$.
This is just the SW-to-NE diagonal that $\square$ lies on.

\subsubsection{Maya diagrams}
A Maya diagram is a function $m:\ZZ\longrightarrow\{\pm 1\}$ such that
\[
m(n)=
\begin{cases}
-1 & n\gg0,\\
1 & n\ll 0.
\end{cases}
\]
We associate to $m$ a visual representation using beads.
Namely, consider a string of black and white beads indexed by $\ZZ$ where the bead at position $n$ is black if $m(n)=1$ and white if $m(n)=-1$.
These beads will be arranged horizontally with the index increasing towards the \textit{left}.
Between indices $0$ and $-1$, we draw a notch and call it the \textit{central line}.

Left of the central line (nonnegative values of $n$) all but finitely many beads will be white.
Likewise, all but finitely many beads right of the central line (negative values of $n$) will be black.
The \textit{charge} $c(m)$ is the difference in the number of discrepancies:
\[
c(m)=\#\{ n\ge 0 \mid m(n)=1 \}-\#\{n<0 \mid m(n)= -1\}.
\]
The \textit{vacuum diagram} is the Maya diagram with only white beads left of the central line and only white beads right of the central line.

\subsubsection{Young--Maya correspondence}
To a partition $\lambda$, we associate a Maya diagram $m_\lambda$ via its \textit{edge sequence}.
Namely, tilt $\lambda$ by $45$ degrees counterclockwise and draw the level sets for the content.
We label by $n$ the gap between content lines $n$ and $n+1$.
Within the gap labeled by $n$, the outer edge of $\lambda$ either has slope $1$ or $-1$; $m_\lambda(n)$ takes the value of the slope of that segment.
Outside of $\lambda$, we assign the default limit values of white beads left of the central line and black beads right of the central line.
For example, Figure \ref{fig:maya-diagram} shows $m_\lambda$ correspondence for $\lambda=(6,4,1)$.

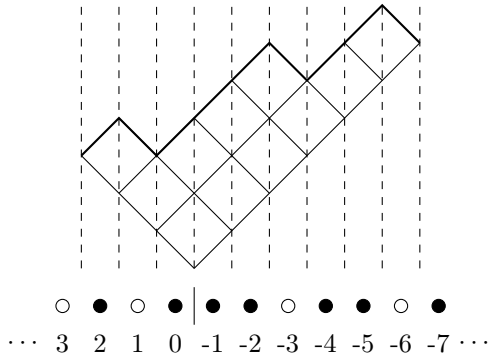
\begin{figure}[ht]
\begin{tikzpicture}[scale=.5]
\draw[thick] (-2.5,5)--(-1.5,6)--(-0.5,5)--(2.5,8)--(3.5,7)--(5.5,9)--(6.5,8);
\draw (-1.5,4)--(0.5,6);
\draw (-.5,3)--(3.5,7);
\draw (.5,2)--(6.5,8);
\draw (-2.5,5)--(.5,2);
\draw (-1.5,6)--(1.5,3);
\draw (0.5,6)--(2.5,4);
\draw (1.5,7)--(3.5,5);
\draw (3.5,7)--(4.5,6);
\draw (4.5,8)--(5.5,7);
\draw (-4,0) node {$\cdots$};
\draw (-3,0) node {3};
\draw (-3,1) circle (5pt);
\draw (-2,0) node {2};
\draw[fill=black] (-2,1) circle (5pt);
\draw (-1,0) node {1};
\draw (-1,1) circle (5pt);
\draw (0,0) node {0};
\draw[fill=black] (0,1) circle (5pt);
\draw (.5,.5)--(.5,1.5);
\draw (1,0) node {-1};
\draw[fill=black] (1,1) circle (5pt);
\draw (2,0) node {-2};
\draw[fill=black] (2,1) circle (5pt);
\draw (3,0) node {-3};
\draw (3,1) circle (5pt);
\draw (4,0) node {-4};
\draw[fill=black] (4,1) circle (5pt);
\draw (5,0) node {-5};
\draw[fill=black] (5,1) circle (5pt);
\draw (6,0) node {-6};
\draw (6,1) circle (5pt);
\draw (7,0) node {-7};
\draw[fill=black] (7,1) circle (5pt);
\draw (8,0) node {$\cdots$};
\draw[dashed] (1.5,2)--(1.5,9);
\draw[dashed] (2.5,2)--(2.5,9);
\draw[dashed] (3.5,2)--(3.5,9);
\draw[dashed] (4.5,2)--(4.5,9);
\draw[dashed] (5.5,2)--(5.5,9);
\draw[dashed] (6.5,2)--(6.5,9);
\draw[dashed] (0.5,2)--(0.5,9);
\draw[dashed] (-0.5,2)--(-0.5,9);
\draw[dashed] (-1.5,2)--(-1.5,9);
\draw[dashed] (-2.5,2)--(-2.5,9);
\end{tikzpicture}
 \label{fig:maya-diagram}
\caption{The Young--Maya correspondence for the partition $\lambda=(6,4,1)$.}
\end{figure}

\begin{prop}\label{YoungMaya}
The map $\lambda\mapsto m_\lambda$ is a bijection between partitions and Maya diagrams of charge $0$.
\end{prop}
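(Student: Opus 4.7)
The plan is to construct an explicit inverse to $\lambda\mapsto m_\lambda$ by reading off a partition from the bead sequence of a charge-$0$ Maya diagram, and then verify that the two maps compose to the identity in both directions.

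First I would confirm that $m_\lambda$ is indeed a Maya diagram of charge $0$. Since $\lambda$ has only finitely many boxes, the contents appearing in $\lambda$ lie in the interval $[-\ell(\lambda)+1,\lambda_1-1]$, so the tilted boundary of $\lambda$ agrees with the vacuum boundary outside this range. In particular $m_\lambda(n)=-1$ for $n\gg 0$ and $m_\lambda(n)=1$ for $n\ll 0$, giving the required decay. For the charge, the tilted boundary of $\lambda$ starts and ends on the two diagonal asymptotes of the vacuum, so the number of slope-$(+1)$ segments occurring where the vacuum has slope $-1$ must equal the number of slope-$(-1)$ segments occurring where the vacuum has slope $+1$. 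These two types of mismatches are precisely the black beads at positions $n\ge 0$ and the white beads at positions $n<0$, so they are equinumerous, giving $c(m_\lambda)=0$.

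Next, for a charge-$0$ Maya diagram $m$, I would build a partition $\lambda(m)$ by concatenating unit edge segments in the order dictated by the Young--Maya convention: at the gap labeled $n$, place a slope-$(+1)$ segment if $m(n)=1$ and a slope-$(-1)$ segment if $m(n)=-1$. The decay conditions on $m$ force this piecewise-linear curve to coincide with the vacuum boundary in both asymptotic directions, and the charge-$0$ hypothesis ensures that the finitely many deviations balance out, so the curve returns to the diagonal just as the vacuum does. The bounded region enclosed above the curve, when rotated back by $-45^\circ$, is the Young diagram of a unique partition $\lambda(m)$, whose rows are weakly decreasing because the curve is the outer edge of a staircase region by construction.

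Finally, both compositions $\lambda\mapsto m_\lambda\mapsto\lambda(m_\lambda)$ and $m\mapsto\lambda(m)\mapsto m_{\lambda(m)}$ are the identity essentially tautologically, since in each case the partition and the Maya diagram are determined by the same sequence of $\pm 1$ slopes along the tilted boundary. The main obstacle in writing this out rigorously is not any individual step but the bookkeeping around conventions, namely the reversed indexing of beads relative to the gaps and the precise placement of the central line; once these are pinned down, no substantive computation is required.
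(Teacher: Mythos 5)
Your argument is correct and is the standard proof of this classical fact, which the paper simply states without proof: the charge counts the imbalance between up-steps and down-steps in the window where the edge path deviates from the vacuum profile, and vanishing charge is exactly the condition that the path constructed from a bead sequence has the same two asymptotic rays as the vacuum, hence bounds the Young diagram of a partition in standard position. Two minor convention slips, neither of which affects the argument: with the paper's convention $c_\square=b-a$ the contents of $\lambda$ lie in $[-\lambda_1+1,\ell(\lambda)-1]$ rather than the interval you wrote, and in the tilted (Russian) picture the Young diagram is the region \emph{below} the outer edge and above the vacuum profile, not above the curve.
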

Under this bijection the empty partition corresponds to the vacuum diagram.

\subsection{Core-quotient decomposition}\label{CoreQuot}
For now on, we fix an integer $r\ge 1$.
Key to the definition of wreath Macdonald polynomials is the decomposition of a partition to its $r$-core and $r$-quotient.
A \textit{ribbon} of $\lambda$ is a contiguous subset of $\lambda$ satisfying the following:
\begin{itemize}
\item it does not contain a $2\times 2$ square of boxes;
\item its removal results in another partition.
\end{itemize}
An $r$-core partition is a partition that does not contain a ribbon of length $r$.
Given a partition $\lambda$, its $r$-core, denoted $\core(\lambda)$, is the resulting partition when one removes ribbons of length $r$ until one can no longer do so.
The construction below shows that this operation is well-defined in that it results in a unique partition $\core(\lambda)$.

\subsubsection{The $r$-quotient}
First, we consider the \textit{quotient subdiagrams} of $m_\lambda$.
Namely, for $0\le i\le r-1$, we define
\[
m_\lambda^{(i)}(n)\coloneq m_\lambda(i+nr).
\]
The subdiagram $m_\lambda^{(i)}$ will have a charge $c_i$ that can be nonzero.
Shifting the central line of $m_\lambda^{(i)}$ to the left by $c_i$ (right if $c_i<0$), we obtain a charge $0$ Maya diagram.
Applying Proposition \ref{YoungMaya}, we obtain a partition $\lambda^{(i)}$.
The $r$-quotient is the tuple
\[
\quot(\lambda)\coloneq \left( \lambda^{(0)},\lambda^{(1)},\ldots,\lambda^{(r-1)} \right)
\] 
Note that we do not emphasize $r$ in the notation $\quot(\lambda)$ because $r$ is fixed throughout the paper.
It is useful to view the entries in $\quot(\lambda)$ as indexed by $\ZZ/r\ZZ$.
We compute the $\quot(\lambda)$ for $\lambda=(6,4,1)$ and $r=3$ in Figure \ref{fig:quotient}.

\begin{figure}[ht]
\centerline{
\begin{tikzpicture}[scale=.5]
\draw (-8,.5) node {$m_\lambda$:};
\draw (-6,0) node {$\cdots$};
\draw (-5,0) node {3};
\draw (-5,1) circle (5pt);
\draw (-4,0) node {2};
\draw[fill=black] (-4,1) circle (5pt);
\draw (-3,0) node {1};
\draw (-3,1) circle (5pt);
\draw (-2,0) node {0};
\draw[fill=black] (-2,1) circle (5pt);
\draw (-1.5,.5)--(-1.5,1.5);
\draw (-1,0) node {-1};
\draw[fill=black] (-1,1) circle (5pt);
\draw (0,0) node {-2};
\draw[fill=black] (0,1) circle (5pt);
\draw (1,0) node {-3};
\draw (1,1) circle (5pt);
\draw (2,0) node {-4};
\draw[fill=black] (2,1) circle (5pt);
\draw (3,0) node {-5};
\draw[fill=black] (3,1) circle (5pt);
\draw (4,0) node {-6};
\draw (4,1) circle (5pt);
\draw (5,0) node {-7};
\draw[fill=black] (5,1) circle (5pt);
\draw (6,0) node {$\cdots$};
\draw (-15,-5.5) node {$m^{(0)}_\lambda:$};
\draw (-13,-6) node {$\cdots$};
\draw (-12,-6) node {3};
\draw (-12,-5) circle (5pt);
\draw (-11,-6) node {0};
\draw[fill=black] (-11,-5) circle (5pt);
\draw (-10,-6) node {-3};
\draw (-10,-5) circle (5pt);
\draw (-9.5,-4)--(-11.5,-2)--(-10.5,-1)--(-8.5,-3)--(-9.5,-4);
\draw (-10.5,-3)--(-9.5,-2);
\draw[dashed] (-9.5, -5.5)--(-9.5,-4.5);
\draw (-9,-6) node {-6};
\draw (-9,-5) circle (5pt);
\draw (-8,-6) node {-9};
\draw[fill=black] (-8,-5) circle (5pt);
\draw (-7,-6) node {$\cdots$};
\draw (-10.5,-5.5)--(-10.5,-4.5);
\draw (-4,-5.5) node {$m^{(1)}_\lambda:$};
\draw (-2,-6) node {$\cdots$};
\draw (-1,-6) node {1};
\draw (-1,-5) circle (5pt);
\draw (-0.5,-5.5)--(-0.5,-4.5);
\draw (-0.5, -3) node {$\varnothing$};
\draw (0,-6) node {-2};
\draw[fill=black] (0,-5) circle (5pt);
\draw (1,-6) node {$\cdots$};
\draw (4, -5.5) node {$m^{(2)}_\lambda:$};
\draw (6,-6) node {$\cdots$};
\draw (7,-6) node {5};
\draw (7,-5) circle (5pt);
\draw[dashed] (7.5,-5.5)--(7.5,-4.5);
\draw (7.5,-3) node {$\varnothing$};
\draw (8,-6) node {2};
\draw[fill=black] (8,-5) circle (5pt);
\draw (8.5,-5.5)--(8.5,-4.5);
\draw (9,-6) node {$\cdots$};
\end{tikzpicture}
}
\caption{The $r$-quotient for $\lambda = (6,4,1)$ and $r=3$. The original central line is drawn with a solid line, and the central line shifted left by $c_i$ is drawn as a dashed line.}
\label{fig:quotient}
\end{figure}
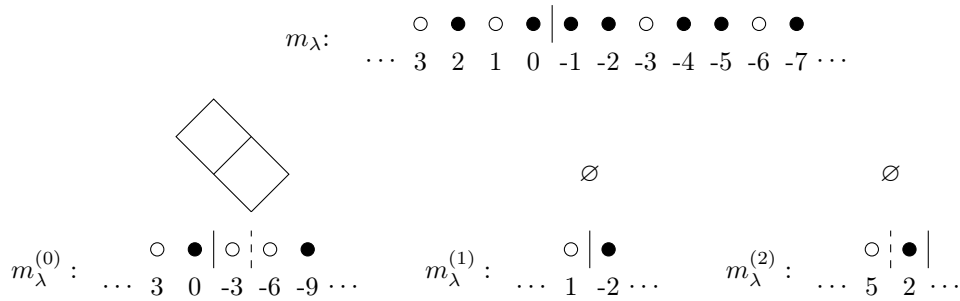

The following is an elementary exercise.
\begin{prop}\label{QuotHook}
We have the following equality:
\[
|\quot(\lambda)|=\#\left\{ \square\in\lambda\,\middle|\, h_\lambda(\square)\equiv 0\:\mathrm{mod}\: r \right\}.
\]
\end{prop}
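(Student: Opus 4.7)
The plan is to prove both sides count the same set of pairs of positions in the Maya diagram $m_\lambda$. The key preliminary fact, which follows directly from tilting the Young diagram as in the construction of $m_\lambda$, is that boxes of $\lambda$ are in bijection with pairs $(i,j)\in\ZZ^2$ satisfying $i>j$, $m_\lambda(i)=1$, and $m_\lambda(j)=-1$; moreover, under this bijection the hook length of the corresponding box equals $i-j$. Intuitively, the pair $(i,j)$ records the endpoints of a ``hook rim'': starting at a black bead on the rim and swapping it with a white bead to the right corresponds to removing the hook, and the number of beads between them is exactly the hook length. I would spend a short paragraph justifying this, as it is the combinatorial engine of the proof.

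Next I would unpack the quotient construction in the same language. The subdiagram $m_\lambda^{(k)}$ is, up to the central-line shift by $c_k$, a charge-zero Maya diagram. Since the shift only relabels the central line, the number of boxes of $\lambda^{(k)}$ equals the number of pairs $(n_1,n_2)$ with $n_1>n_2$, $m_\lambda^{(k)}(n_1)=1$, $m_\lambda^{(k)}(n_2)=-1$. Substituting the definition $m_\lambda^{(k)}(n)=m_\lambda(k+nr)$ and setting $i=k+n_1r$, $j=k+n_2r$, these pairs correspond exactly to pairs $(i,j)$ with $i>j$, $m_\lambda(i)=1$, $m_\lambda(j)=-1$, and $i\equiv j\equiv k\pmod{r}$.

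Summing over $k=0,\ldots,r-1$ then gives
\[
|\quot(\lambda)|=\#\big\{(i,j)\in\ZZ^2 \mid i>j,\ m_\lambda(i)=1,\ m_\lambda(j)=-1,\ i\equiv j\pmod{r}\big\}.
\]
By the preliminary fact, the right-hand side counts pairs corresponding to boxes $\square\in\lambda$ whose hook length $h_\lambda(\square)=i-j$ is divisible by $r$, completing the proof.

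The only real obstacle is the preliminary step identifying boxes with pairs of opposite-colored beads and matching $h_\lambda(\square)$ with $i-j$; everything after that is bookkeeping. If one prefers to avoid a figure-heavy justification, an alternative is to verify both the bead-pair bijection and the hook-length identity by induction on $|\lambda|$ using the fact that swapping adjacent opposite-colored beads in $m_\lambda$ corresponds to removing a single rim hook, with the length of the hook equal to the distance between the swapped beads. This formulation also makes transparent that $r$-ribbon removal corresponds to swapping beads at distance $r$, which is further evidence for the main bijection.
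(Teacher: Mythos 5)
Your proof is correct: the paper leaves this as an elementary exercise with no written proof, and your argument via the classical bijection between boxes of $\lambda$ and pairs (black bead at $i$, white bead at $j$, $i>j$) in $m_\lambda$ with $h_\lambda(\square)=i-j$, followed by restriction to the congruence classes $i\equiv j\equiv k \pmod r$ defining the quotient components, is exactly the standard route one would take. The only point worth spelling out in a final write-up is the preliminary bead-pair lemma itself (e.g.\ by the rim-hook/bead-swap induction you sketch), together with the observation that the inversion count is unaffected by the recentering of each $m_\lambda^{(k)}$; everything else is, as you say, bookkeeping.
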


\subsubsection{The $r$-core}
For the $r$-core, we replace each $m_\lambda^{(i)}$ with a vacuum diagram with central line shifted left by the charge $c_i$.
Reconstituting these quotient diagrams back into one big Maya diagram, we obtain a Maya diagram of charge zero.
Its associated partition is $\core(\lambda)$.
We illustrate this for $\lambda=(6,4,1)$ and $r=3$ in Figure \ref{fig:core}.

\begin{figure}[ht]
\centering
\begin{tikzpicture}[scale=.5]
\draw (-6,.5) node {$m_\lambda$:};
\draw (-4,0) node {$\cdots$};
\draw (-3,0) node {3};
\draw (-3,1) circle (5pt);
\draw (-2,0) node {2};
\draw[fill=black] (-2,1) circle (5pt);
\draw (-1,0) node {1};
\draw (-1,1) circle (5pt);
\draw (0,0) node {0};
\draw[fill=black] (0,1) circle (5pt);
\draw (.5,.5)--(.5,1.5);
\draw (1,0) node {-1};
\draw[fill=black] (1,1) circle (5pt);
\draw (2,0) node {-2};
\draw[fill=black] (2,1) circle (5pt);
\draw (3,0) node {-3};
\draw (3,1) circle (5pt);
\draw (4,0) node {-4};
\draw[fill=black] (4,1) circle (5pt);
\draw (5,0) node {-5};
\draw[fill=black] (5,1) circle (5pt);
\draw (6,0) node {-6};
\draw (6,1) circle (5pt);
\draw (7,0) node {-7};
\draw[fill=black] (7,1) circle (5pt);
\draw (8,0) node {$\cdots$};
\draw (0.5,-2) node {$\downarrow$};
\draw (-4,-9) node {$\cdots$};
\draw (-3,-9) node {3};
\draw (-3,-8) circle (5pt);
\draw (-2,-9) node {2};
\draw[fill=black] (-2,-8) circle (5pt);
\draw (-1,-9) node {1};
\draw (-1,-8) circle (5pt);
\draw (0,-9) node {0};
\draw (0,-8) circle (5pt);
\draw (.5,-8.5)--(.5,-7.5);
\draw (1,-9) node {-1};
\draw[fill=black] (1,-8) circle (5pt);
\draw (2,-9) node {-2};
\draw[fill=black] (2,-8) circle (5pt);
\draw (3,-9) node {-3};
\draw (3,-8) circle (5pt);
\draw (4,-9) node {-4};
\draw[fill=black] (4,-8) circle (5pt);
\draw (5,-9) node {$\cdots$};
\draw (.5,-7)--(3.5, -4)--(2.5,-3)--(0.5, -5)--(-1.5,-3)--(-2.5,-4)--(.5,-7);
\draw (-1.5,-5)--(-0.5,-4);
\draw (-.5,-6)--(.5,-5);
\draw (0.5,-5)--(1.5,-6);
\draw (1.5,-4)--(2.5,-5);
\end{tikzpicture}\caption{The 3-core of $\lambda=(6,4,1)$.}
\label{fig:core}
\end{figure}
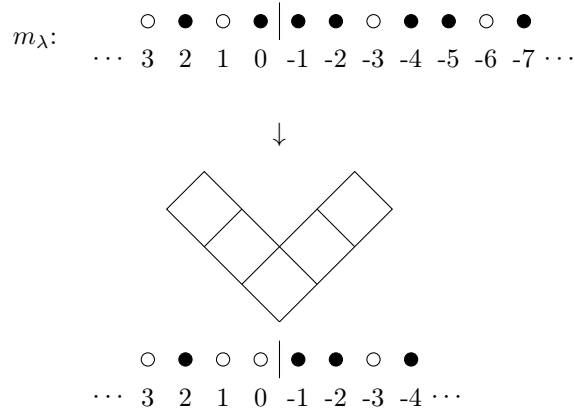

In a sense, $\quot(\lambda)$ records a box for each ribbon of length $r$ that has been removed and $\core(\lambda)$ records what is left over.
To summarize, we have the following proposition.
\begin{prop}
The map
\[
\lambda\longmapsto(\core(\lambda),\quot(\lambda))
\]
yields a bijection
\[
\{\hbox{partitions}\}\longleftrightarrow\{\hbox{$r$-cores}\}\times\{\hbox{$r$-tuples of partitions}\}.
\]
\end{prop}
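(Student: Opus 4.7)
The plan is to work entirely through the Young--Maya correspondence (Proposition~\ref{YoungMaya}) and decompose each Maya diagram via its $r$ quotient subdiagrams; the bijection then assembles from two elementary bijections.

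First, I would extend Proposition~\ref{YoungMaya} to Maya diagrams of arbitrary charge: every Maya diagram $m$ of charge $c$ corresponds bijectively to a pair $(\nu, c)$, where $\nu$ is the partition obtained by shifting the central line of $m$ by $c$ (so that the shifted diagram has charge $0$) and applying Proposition~\ref{YoungMaya}. Second, I would observe that the subdiagram construction $m \longmapsto (m^{(0)}, \ldots, m^{(r-1)})$, together with the evident reassembly map, is a bijection between Maya diagrams and $r$-tuples of Maya diagrams, and that the charge is additive: $c(m) = \sum_{i=0}^{r-1} c(m^{(i)})$. Restricting to charge-$0$ Maya diagrams on the left and applying the first step to each subdiagram on the right yields a bijection
\[
\{\text{partitions}\} \longleftrightarrow \bigl\{\,\bigl((\lambda^{(0)}, c_0), \ldots, (\lambda^{(r-1)}, c_{r-1})\bigr) : \lambda^{(i)}\ \text{a partition},\ {\textstyle\sum_i} c_i = 0 \,\bigr\}.
\]
By construction, the partition components $(\lambda^{(0)}, \ldots, \lambda^{(r-1)})$ are exactly $\quot(\lambda)$.

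It remains to match the tuple of charges with the $r$-core. For this, I would verify that removing a ribbon of length $r$ from $\lambda$ corresponds to swapping the bead values at two positions $n$ and $n - r$ in $m_\lambda$ with $m_\lambda(n) = 1$ and $m_\lambda(n - r) = -1$; after the swap those values become $-1$ and $1$ respectively. Since $n \equiv n - r \pmod{r}$, this swap occurs inside a single subdiagram $m_\lambda^{(i)}$ and leaves every charge $c_j$ unchanged. Consequently a partition is an $r$-core iff no such swap is possible in any subdiagram, iff each $m_\lambda^{(i)}$ consists of black beads strictly to the right of white beads, iff each $m_\lambda^{(i)}$ is a vacuum diagram; and a vacuum diagram is determined by its charge. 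This simultaneously shows that the $r$-core is well-defined (independent of the order of ribbon removal), that $r$-cores are in bijection with tuples $(c_0, \ldots, c_{r-1}) \in \ZZ^r$ with $\sum c_i = 0$, and that the partition reconstructed by the ``replace each subdiagram with a vacuum of the same charge'' procedure is indeed $\core(\lambda)$. Combining with the previous paragraph gives the claimed bijection; the inverse map is immediate---from $(\alpha, \vec{\mu})$, read off the charges $c_i$ from $m_\alpha$, build each $m_\lambda^{(i)}$ from $(\mu^{(i)}, c_i)$, and reassemble.

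The main obstacle is the dictionary in step three between length-$r$ ribbons in the Young diagram and bead swaps at positions $(n, n-r)$ in the Maya diagram. This requires carefully matching the portion of the outer border of $\lambda$ subtended by a ribbon to an interval of the bead sequence encoding that border, so that the bead at the ``head'' of the ribbon lies precisely $r$ positions to the right of the bead at the ``tail''. Once this local correspondence is established, the remaining assertions follow mechanically from the charge-additivity and the characterisation of vacuum subdiagrams.
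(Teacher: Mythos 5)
Your proposal is correct and follows exactly the route the paper takes: the paper presents the quotient-subdiagram construction, charge bookkeeping, and vacuum-diagram replacement in Subsection~\ref{CoreQuot} and states the proposition as a summary of that construction, which is precisely what you have fleshed out (including the ribbon-removal-as-bead-swap dictionary that makes the $r$-core well-defined). No gaps.
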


\subsection{Characters}
Now we introduce our Macdonald parameters $(q,t)$.
To a box $\square=(a,b)\in\ZZ^2$, we set its character to be
\[
\chi_\square\coloneq q^at^b.
\]
The \textit{color} of $\square=(a,b)$ will be the class of $b-a$ modulo $r$; we denote this by $\bar{c}_{\square}$.

\subsubsection{Sum conventions}\label{CharSum}
For a (possibly infinite) set $A\subset\ZZ^2$, consider a sum of characters
\[
S=\sum_{\square\in A}n_\square\chi_\square
\]
for some integers $\{n_\square\}_{\square\in A}$.
We will subdivide $S$ according to color: for $i\in\ZZ/r\ZZ$,
\[
S^{(i)}\coloneq\sum_{\substack{\square\in A\\ \bar{c}_\square=i}}n_\square\chi_\square.
\]
We will occasionally denote $S^\bullet=S$ to make the reader conscious of this subdivision.
Notice that multiplication by $q$ cyclically lowers the color parameter whereas multiplication by $t$ does the opposite:
\begin{equation*}
\begin{aligned}
\left( qS \right)^{(i-1)}&= q\left( S^{(i)} \right),&
\left( tS \right)^{(i+1)}&= t\left( S^{(i)} \right).
\end{aligned}
\end{equation*}

To a partition $\lambda$, we will often work with the following two sums:
\begin{align*}
B_\lambda^\bullet&\coloneq \sum_{\square\in\lambda}\chi_\square\\
D_\lambda^\bullet&\coloneq (1-q)(1-t)B_\lambda^\bullet-1.
\end{align*}
Note that if we set
\begin{align*}
A_i(\lambda)&\coloneq\, \hbox{the addable boxes of color $i$}\\
R_i(\lambda)&\coloneq\, \hbox{the removable boxes of color $i$}
\end{align*}
then
\[
D_\lambda^{(i)}=qt\sum_{\square\in R_(\lambda)}\chi_\square-\sum_{\square\in A_i(\lambda)}\chi_\square.
\]

Finally, let us end on two points.
First, we will encounter rational functions in $(q,t)$ with denominators of the form $(1-q^kt^l)$.
We will by default expand this into series assuming $|q|,|t|<1$.
Second, we define a character inversion operation by
\[
\overline{S}\coloneq S\,\bigg|_{(q,t)\mapsto(q^{-1},t^{-1})}.
\]
To avoid convergence issues, we will only apply this to finite sums.
Note that following our color conventions,
\[
\overline{S}^{(i)}= \overline{\left( S^{(-i)} \right)}.
\]
This is contrary to the $(-)_*$ operation on characters defined in \cite{RW}, where
\[
S_*^{(i)}= S^{(i)}\,\bigg|_{(q,t)\mapsto (q^{-1},t^{-1})}.
\]
The reader should keep this in mind when we use formulas from \cite{RW} involving inverse characters.

\subsubsection{Plethystic exponential}
For a sum of characters 
\[
S=\sum_{\square\in A}n_\square\chi_\square,
\]
with positive exponents, we define its plethystic exponential to be
\[
\Omega[S]:=\exp\left( \sum_{k>0}\sum_{\square\in A} n_\square\chi_\square^k \right)=\prod_{\square\in A}\left( 1-\chi_\square \right)^{-n_\square}.
\]
Note that our assumption $|q|,|t|<1$ is important here.
To handle characters with arbitrary exponents, we will introduce a color $0$ parameter $u$ such that $|u|\ll 1$.

\subsubsection{Nekrasov factors}\label{NekFac}
Recall the Nekrasov factor \eqref{Eq_NF} from the introduction.
The following lemma gives an expression for $N_{\lambda,\mu}(u)$ in terms of a plethystic exponential.

\begin{lem}\label{NekExp}
The Nekrasov factor $N_{\lambda,\mu}(u)$ can be written as
\[
N_{\lambda,\mu}(u)=\Omega\left[\left(\frac{uqtD_\lambda\overline{D}_\mu}{(1-q)(1-t)}\right)^{(0)}-\left(\frac{uqt}{(1-q)(1-t)}\right)^{(0)} \right].
\]
\end{lem}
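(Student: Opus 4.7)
The plan is to first expand the expression inside the plethystic exponential using the definitions of $D_\lambda$ and $\overline{D}_\mu$, and then reduce the identity to a classical arm/leg character identity. The wreath ($r$-core) structure enters only at the very end, through the color-$0$ projection.

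Using $(1-q^{-1})(1-t^{-1})=q^{-1}t^{-1}(1-q)(1-t)$ together with $D_\lambda=(1-q)(1-t)B_\lambda-1$ and $\overline{D}_\mu=(1-q^{-1})(1-t^{-1})\overline{B}_\mu-1$, a direct calculation yields
\[
\frac{uqt\,D_\lambda\overline{D}_\mu}{(1-q)(1-t)}-\frac{uqt}{(1-q)(1-t)}=u(1-q)(1-t)B_\lambda\overline{B}_\mu-uqt\,B_\lambda-u\,\overline{B}_\mu.
\]
In particular, the correction term that is subtracted in the statement of the lemma precisely cancels the constant $+1$ inside $D_\lambda\overline{D}_\mu$ and removes the only denominator, so the argument of $\Omega$ is a genuine (Laurent) sum of characters.

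Next, I would invoke the classical arm/leg character identity
\[
(1-q)(1-t)B_\lambda\overline{B}_\mu-qt\,B_\lambda-\overline{B}_\mu=-\sum_{\square\in\lambda}q^{-a_\mu(\square)}t^{l_\lambda(\square)+1}-\sum_{\square\in\mu}q^{a_\lambda(\square)+1}t^{-l_\mu(\square)},
\]
which is the standard expression for the (negative of the) Ext-bundle character appearing in the AGT literature. It may be verified directly by writing $(1-q)B_\lambda=\sum_{i\ge 1}t^{i-1}(1-q^{\lambda_i})$ (and analogously for $\overline{B}_\mu$), expanding the four resulting sums on the left, and telescoping row by row to recognise the differences of partition parts as arm and leg lengths of boxes in $\lambda$ and $\mu$.

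Finally, the monomial $q^{-a_\mu(\square)}t^{l_\lambda(\square)+1}$ has color $l_\lambda(\square)+1+a_\mu(\square)=h_{\mu,\lambda}(\square)\pmod r$, and $q^{a_\lambda(\square)+1}t^{-l_\mu(\square)}$ has color $-h_{\lambda,\mu}(\square)\pmod r$. Projecting onto color $0$ therefore restricts the two sums to the boxes whose mixed hook-length is divisible by $r$, and applying $\Omega$ to the resulting sum with coefficient $-u$ produces exactly the product $N_{\lambda,\mu}(u)$ defined in \eqref{Eq_NF}. The only real labour is the verification of the arm/leg identity, which is purely combinatorial and independent of $r$; all of the wreath dependence of the lemma is concentrated in this final color-$0$ projection.
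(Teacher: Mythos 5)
Your proposal is correct and follows essentially the same route as the paper: the paper also reduces the lemma to the classical character identity $E_{\lambda,\mu}=qtB_\lambda+\overline{B}_\mu-(q-1)(t-1)B_\lambda\overline{B}_\mu=\frac{-qt}{(1-q)(1-t)}\big(D_\lambda\overline{D}_\mu-1\big)$ (cited from Mellit and Carlsson--Okounkov rather than reproved) and then observes that the color-$0$ projection selects exactly the boxes with mixed hook-length divisible by $r$. Your algebraic expansion, the cancellation of the $+1$ by the correction term, and the color computation $\bar c=h_{\mu,\lambda}(\square)$ resp.\ $-h_{\lambda,\mu}(\square)$ all match the paper's argument.
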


\begin{proof}
Set
\[
E_{\lambda,\mu}^\bullet\coloneq \sum_{\square\in\lambda}
q^{-a_\mu(\square)}t^{l_\lambda(\square)+1} 
+
\sum_{\square\in\mu}
q^{a_\lambda(\square)+1}t^{-l_\mu(\square)}.
\]
Observe that
\begin{equation*}
N_{\lambda,\mu}(u)=\Omega\left[-u E_{\lambda,\mu}^{(0)} \right].
\label{NekOmega}
\end{equation*}
On the other hand, in \cite[Lemma 2.1]{MellitPoin1} (cf.~\cite[Lemma 6]{COExt}), it was shown that
\begin{align*}
E_{\lambda,\mu}&=qt B_{\lambda}+\overline{B}_{\mu}-(q-1)(t-1)B_\lambda \overline{B}_{\mu}\\
&= \frac{-qt}{(1-q)(1-t)}\big(D_\lambda\overline{D}_\mu-1 \big).\qedhere
\end{align*}
\end{proof}

\section{Wreath Macdonald theory}\label{WMT}
\subsection{Multi-symmetric functions}
Let $\Lambda$ be the ring of symmetric functions and further set $\Lambda_{q,t}\coloneq\Lambda\otimes\CC(q,t)$.
We have the usual distinguished elements of $\Lambda$ (cf.~\cite{Mac}):
\begin{itemize}
\item the \textit{power sum} $p_n$;
\item the \textit{elementary symmetric function} $e_n$;
\item the \textit{complete symmetric function} $h_n$;
\item the \textit{Schur function} $s_\lambda$ associated to a partition $\lambda$.
\end{itemize}
We will use plethystic notation, viewing them as functions in an alphabet of variables $X$: $p_n[X]$, $e_n[X]$, etc.

For our choice of $r$, we consider the rings $\Lambda^{\otimes r}$ and $\Lambda_{q,t}^{\otimes r}$.
The tensorands will be indexed from $0$ to $r-1$, which we view as elements of $\ZZ/r\ZZ$.
To the tensorand indexed by $i\in\ZZ/r\ZZ$, we assign an alphabet $X^{(i)}$; we call $i$ the color of the alphabet $X^{(i)}$.
Given $f\in\Lambda$, we denote by $f[X^{(i)}]\in\Lambda^{\otimes r}$ to be the element that is $f$ in the tensorand indexed by $i$ and 1 elsewhere.
For example, we have colored power sums $p_n[X^{(i)}]$, which, varying over $n$ and $i$, are polynomial generators of $\Lambda^{\otimes r}$.
A general element $f\in\Lambda^{\otimes r}$ can be a nontrivial function in every color of variable, so we denote it by $f[X^\bullet]$.
To an $r$-tuple of partitions $\vec{\lambda}=\left( \lambda^{(0)},\lambda^{(1)},\ldots,\lambda^{(r-1)} \right)$, we can associate the \textit{multi-Schur} function
\[
s_{\vec{\lambda}}[X^\bullet]\coloneq s_{\lambda^{(0)}}[X^{(0)}]s_{\lambda^{(1)}}[X^{(1)}]\cdots s_{\lambda^{(r-1)}}[X^{(r-1)}].
\]
Varying over all $r$-tuples of partitions, the multi-Schur functions give a basis of $\Lambda^{\otimes r}$.

\subsubsection{Matrix plethysm}
Because $\Lambda$ is a polynomial ring in the power sums $\{p_1,p_2,\ldots\}$, we can define ring homomorphisms out of $\Lambda$ by specifying the image of each $p_n$.
The usual plethysm, which we call here scalar plethysm, gives a convenient shorthand for certain homomorphisms of interest.
Given a series $E=E(u_1,u_2,\ldots)$ in some parameters $u_1,u_2, \ldots$, we set
\[
p_n[EX]= E(u_1^n,u_2^n,\ldots) p_n[X]
\]
and extend algebraically to define $f[EX]$ for any $f\in\Lambda$.
We can make sense of the case when $E$ is a rational function by taking series expansions.
For instance,
\[
p_n\left[ \frac{X}{1-q} \right]=\frac{p_n[X]}{1-q^n},
\] 
which gives a well-defined element of $\Lambda_{q,t}$.

For $\Lambda^{\otimes r}$, there is the richer structure of matrix plethysm.
Namely, we now take an $r\times r$ matrix (whose rows and columns are indexed by $\ZZ/r\ZZ$)
\[
M=\big( E_{i,j}(u_1,u_2,\ldots) \big)
\]
where each $E_{i,j}=E_{i,j}(u_1,u_2,\ldots)$ is a series.
The image of $p_n[X^{(i)}]$ is given by
\[
p_n[MX^{(i)}]\coloneq \sum_{j\in\ZZ/r\ZZ} p_n[E_{j,i}X^{(j)}],
\]
from which we define $f[MX^\bullet]$ for a general $f\in\Lambda^{\otimes r}$.
Two particular instances of this will be important for us: define $\sigma$ and $\iota$ by
\begin{align*}
p_n[\sigma X^{(i)}]&\coloneq p_n[X^{(i+1)}],\\
p_n[\iota X^{(i)}]&\coloneq p_n[X^{(-i)}].
\end{align*}
Setting $(-)^T$ as the transpose matrix, note that $\sigma^T=\sigma^{-1}$ and $\iota^T=\iota$.
The following can be checked by direct calculation.

\begin{lem}\label{PlethInv}
For any parameter $s$, the plethysm $(1-s\sigma^{\pm 1})$ is invertible with inverse given by
\[
p_n\left[ \frac{X^{(i)}}{(1-s\sigma^{\pm 1})} \right]=\frac{\sum_{j=0}^{r-1}s^{nj}p_n[X^{(i\pm j)}]}{1-s^{nr}}.
\]
\end{lem}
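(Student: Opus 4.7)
The plan is to verify the identity by a direct calculation on the power-sum generators $p_n[X^{(i)}]$ and then extend: both $(1-s\sigma^{\pm 1})$ and the candidate inverse defined by the right-hand side of the formula are specified on the polynomial generators of $\Lambda_{q,t}^{\otimes r}$, so it suffices to check that composing them in either order yields the identity on each $p_n[X^{(i)}]$.

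First, I would unpack the left-hand side. From the definition of matrix plethysm together with the defining rule $p_n[\sigma^{\pm 1}X^{(i)}] = p_n[X^{(i\pm 1)}]$ and the scalar plethysm identity $p_n[sX] = s^n p_n[X]$, one finds
\[
p_n[(1-s\sigma^{\pm 1}) X^{(i)}] = p_n[X^{(i)}] - s^n p_n[X^{(i\pm 1)}].
\]
Second, let $\Psi$ denote the operator defined on generators by the right-hand side of the statement. Applying $\Psi$ to the displayed expression, linearity together with a reindexing of the shifted sum telescopes everything except the term at $j=0$ and an ``overflow'' contribution at $j=r$. Since color indices live in $\ZZ/r\ZZ$, the overflow equals $s^{nr} p_n[X^{(i)}]$, so the numerator collapses to $(1-s^{nr})\, p_n[X^{(i)}]$, which cancels the denominator to give $p_n[X^{(i)}]$ exactly. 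The opposite composition $(1-s\sigma^{\pm 1}) \circ \Psi$ is verified by the same computation with the order reversed.

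There is no substantive obstacle here; the content of the lemma is the finite geometric series identity $(1-x)(1+x+\cdots+x^{r-1}) = 1-x^r$ repackaged under the cyclic identification of alphabet colors. The only thing requiring care is the bookkeeping of the two signs in $\sigma^{\pm 1}$, but the two cases are mirror images of one another and may be treated in parallel.
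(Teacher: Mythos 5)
Your proof is correct and is precisely the ``direct calculation'' the paper invokes (the paper itself offers no further argument): unwind the matrix plethysm on the generators $p_n[X^{(i)}]$, reindex the shifted geometric sum, and use the cyclic identification $i+r\equiv i$ so that the overflow term $s^{nr}p_n[X^{(i)}]$ cancels the denominator. Nothing is missing.
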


\subsubsection{Vector plethysm}
Next, we discuss evaluations.
In the scalar case, given a series $E=E(u_1,u_2,\ldots)$, we set
\[
p_n[E]:=E(u_1^n, u_2^n,\ldots).
\]
This is extended algebraically to define $f[E]$ for $f\in\Lambda$.
To evaluate elements of $\Lambda^{\otimes r}$, our inputs are subdivided along color:
\[
E^\bullet=\sum_{i\in\ZZ/r\ZZ} E^{(i)}.
\]
For $f\in\Lambda^{\otimes r}$, we define $f[E^\bullet]$ by mapping
\[
p_n[X^{(i)}]\mapsto p_n[E^{(i)}].
\]
Thus, one can view $E^\bullet$ as a vector plethysm.
The main instance of this kind of subdivision is the way we split characters according to their colors in \ref{CharSum}.
Finally, when we combine matrix and vector plethysms, we mean apply the matrix plethysm on the function first and then perform the vector plethysm evaluation.
For example, $f[\sigma^k\iota D_\lambda^\bullet]$ means we send
\[
p_n[X^{(i)}]\mapsto p_n[X^{(k-i)}]\mapsto p_n[D_\lambda^{(k-i)}].
\]
 
\subsubsection{$\Omega$ and $\TT$}\label{OT}
We define
\begin{equation}
\Omega[X]=\exp\left( \sum_{k>0}\frac{p_k[X]}{k} \right)=\sum_{n\ge 0}h_n[X].
\label{OmegaDef}
\end{equation}
Although this is an infinite sum of elements in $\Lambda$, each summand of fixed degree is finite.
Alternatively, it may be viewed as an element of the completion of $\Lambda$ by degree.
In $\Lambda^{\otimes r}$, we have $\Omega[X^{(i)}]$, and for any matrix plethysm $M$, we can make sense of 
\begin{align*}
\Omega[MX^{(i)}]&=\Omega\left[ \sum_{j\in\ZZ/r\ZZ}E_{j,i}X^{(j)}\right]\\
&=\Omega\left[ E_{0,i}X^{(0)}\right]\Omega\left[ E_{1,i}X^{(1)} \right]\cdots\Omega\left[ E_{r-1,i}X^{(i)} \right] .
\end{align*}

The translation operator $\TT[X]$ on $\Lambda$ is the ring automorphism defined by
\[
\TT[X](p_n[X])=p_n[X]+1.
\]
If we define the skewing operator by
\[
p_m^\perp[X]=m\frac{\partial}{\partial p_m[X]},
\]
then we have
\[
\TT[X]=\exp\left( \sum_{k>0} \frac{p_k^\perp[X]}{k} \right),
\]
in analogy with \eqref{OmegaDef}.
For a scalar plethysm $E=E(u_1,u_2,\ldots)$, we set
\begin{align*}
p_k^\perp[EX]&\coloneq E(u_1^k,u_2^k,\ldots) p_k^{\perp}[X]\\
\TT[EX]&\coloneq \exp\left( \sum_{k>0}\frac{p_k^\perp[EX]}{k} \right).
\end{align*}
Note then that
\[
\TT[EX]\left( p_n[X] \right)=p_n[X]+E(u_1^n,u_2^n,\ldots). 
\]
Finally, on $\Lambda^{\otimes r}$, we naturally define
\begin{align*}
p_m^\perp[X^{(i)}]&=m\frac{\partial}{\partial p_m[X^{(i)}]}\\
\TT[X^{(i)}]&=\exp\left( \sum_{k>0}\frac{p_k^\perp[X^{(i)}]}{k} \right).
\end{align*}
For any matrix plethysm $M$, we set
\begin{align*}
\TT[MX^{(i)}]&= \TT\left[ \sum_{j\in\ZZ/r\ZZ}E_{j,i}X^{(j)} \right]\\
&= \TT\left[ E_{0,i}X^{(0)} \right]\TT\left[ E_{1,i}X^{(1)} \right]\cdots\TT\left[ E_{r-1,i}X^{(r-1)} \right].\\
\end{align*}

\begin{lem}[\protect{\cite[Lemma 3.9]{RW}}]\label{OTComm}
For a pair of matrix plethysms $A$ and $B$, we have
\[
\TT\left[ AX^{(j)}z \right]\Omega\left[ BX^{(k)}w \right]
=
\Omega\left[ (A^TB)_{j,k}zw \right]\Omega\left[ BX^{(k)}w \right]\TT\left[ AX^{(j)}z \right].
\]
\end{lem}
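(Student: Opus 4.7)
The plan is to unpack both operators and compute the commutator by direct substitution. First, I would identify $\TT[AX^{(j)}z]$, via its definition in Subsection~\ref{OT}, as the exponential of a commuting family of first-order derivations; it therefore acts on (a completion of) $\Lambda_{q,t}^{\otimes r}$ as the algebra automorphism that translates
\[
p_n[X^{(i)}]\longmapsto p_n[X^{(i)}]+A_{i,j}(u_1^n,u_2^n,\ldots)\,z^n
\]
for every color $i\in\ZZ/r\ZZ$ and every $n\ge 1$. On the other hand, $\Omega[BX^{(k)}w]$ is the element
\[
\exp\Bigl(\,\sum_{n>0}\frac{w^n}{n}\sum_{i\in\ZZ/r\ZZ}B_{i,k}(u_1^n,u_2^n,\ldots)\,p_n[X^{(i)}]\Bigr)
\]
of $\Lambda_{q,t}^{\otimes r}$, viewed as a multiplication operator.

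Next I would use the fact that if $M_g$ denotes multiplication by $g$ and $\TT[AX^{(j)}z]$ is an algebra automorphism, then $\TT[AX^{(j)}z]\,M_g=M_{\TT[AX^{(j)}z](g)}\,\TT[AX^{(j)}z]$. Applying the translation above to the exponent of $\Omega[BX^{(k)}w]$ and separating the part that survives as a function of the $p_n[X^{(i)}]$ from the part that becomes a scalar, the shifted exponent splits as the original (giving back $\Omega[BX^{(k)}w]$) plus the constant correction
\[
\sum_{n>0}\frac{(zw)^n}{n}\sum_{i\in\ZZ/r\ZZ}A_{i,j}(u_1^n,\ldots)\,B_{i,k}(u_1^n,\ldots).
\]
The key algebraic identity here is nothing more than the definition of matrix multiplication,
\[
\sum_{i\in\ZZ/r\ZZ}A_{i,j}(u_1^n,\ldots)\,B_{i,k}(u_1^n,\ldots)=(A^{T}B)_{j,k}(u_1^n,\ldots),
\]
so exponentiating produces exactly $\Omega[(A^{T}B)_{j,k}\,zw]$, a scalar series in the $u,z,w$-parameters which therefore commutes freely with the multiplication operator $\Omega[BX^{(k)}w]$. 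Reassembling the pieces yields the claimed identity.

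The main obstacle, such as it is, is bookkeeping of the matrix-plethysm conventions rather than any substantive analysis. One has to remember that under the convention $p_n[MX^{(i)}]=\sum_{j}p_n[E_{j,i}X^{(j)}]$ the coefficient of $p_n[X^{(i)}]$ sitting inside $AX^{(j)}$ is $A_{i,j}$, so the row index matches the tensorand while the column index matches the input color. Pairing this against the $B_{i,k}$ extracted from $\Omega[BX^{(k)}w]$ and summing over the shared row index $i$ is precisely what produces the $(j,k)$-entry of $A^{T}B$ rather than of $AB$, which accounts for the transpose on the right-hand side.
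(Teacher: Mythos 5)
Your proof is correct: the paper itself only cites this lemma from \cite[Lemma~3.9]{RW} without reproducing a proof, and your argument --- realize $\TT[AX^{(j)}z]$ as the algebra automorphism $p_n[X^{(i)}]\mapsto p_n[X^{(i)}]+A_{i,j}(u_1^n,\ldots)z^n$, push it through the exponent of the multiplication operator $\Omega[BX^{(k)}w]$, and read off the central scalar $\Omega[(A^TB)_{j,k}zw]$ from $\sum_i A_{i,j}B_{i,k}=(A^TB)_{j,k}$ --- is exactly the standard computation behind that result, with the index conventions handled correctly.
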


\subsection{Wreath Macdonald polynomials}
The following is equivalent to the definition given by Haiman \cite{Haiman}.

\begin{defn}
Given a partition $\lambda$, the \textit{wreath Macdonald polynomial} $H_\lambda[X^\bullet;q,t]\in\Lambda^{\otimes r}_{q,t}$ is determined by three conditions:
\begin{itemize}
\item $H_\lambda\left[ (1-q\sigma^{-1})X^\bullet;q,t \right]\in\mathrm{span}\left\{ s_{\quot(\mu)}\,\middle|\, \mu\ge\lambda\hbox{ and } \core(\mu)=\core(\lambda) \right\}$;
\item $H_\lambda\left[ (1-t^{-1}\sigma^{-1})X^\bullet; q,t \right]\in\mathrm{span}\left\{ s_{\quot(\mu)}\,\middle|\, \mu\le\lambda\hbox{ and } \core(\mu)=\core(\lambda) \right\}$;
\item $H_\lambda[1]=1$.
\end{itemize}
When we have no need to emphasize the $(q,t)$ dependence, we may simply denote it by $H_\lambda[X^\bullet]$ or even $H_\lambda$.
Note that $\deg(H_\lambda)=|\quot(\lambda)|$.
\end{defn}
Bezrukavnikov--Finkelberg \cite{BezFink} first proved the existence of $H_\lambda$ as well as a version of multi-Schur positivity.
An alternative proof of existence was given in \cite{WreathEigen}.

Bases of $\Lambda^{\otimes r}$ are naturally indexed by $r$-tuples of partitions.
If we fix an $r$-core $\alpha$, then $\{H_\lambda\,|\, \core(\lambda)=\alpha\}$ does indeed form a basis of $\Lambda_{q,t}^{\otimes r}$.
One can view the $r$-core as imposing an order on $r$-tuples of partitions by going backwards along the core-quotient decomposition and applying the usual dominance order.

\subsubsection{Pairing}
Recall that $\Lambda$ possesses the Hall inner product $\langle-,-\rangle$, for which the Schur functions form an orthonormal basis.
We use the same notation for the tensor product pairing on $\Lambda^{\otimes r}$; now the multi-Schur functions form an orthonormal basis:
\[
\langle s_{\vec{\lambda}},s_{\vec{\mu}}\rangle=\delta_{\vec{\lambda},\vec{\mu}}.
\]
The (modified) \textit{wreath Macdonald pairing} $\langle -, - \rangle_{q,t}'$ on $\Lambda_{q,t}^{\otimes r}$ is defined to be
\[
\left\langle f, g\right\rangle_{q,t}'\coloneq\left\langle f\left[\iota X^\bullet\right], g\left[(1-q\sigma^{-1})(t\sigma-1)X^\bullet  \right]\right\rangle.
\]
This pairing is symmetric.

\begin{lem}[\protect{\cite[Lemma 3.10]{RW}}]\label{AdjointLem}
We have the following adjunction relation:
\[
\left\langle \Omega[X^{(i)}]f, g\right\rangle_{q,t}'
=
\left\langle f, \TT\left[ (1-q\sigma)(t\sigma^{-1}-1)X^{(-i)} \right]g\right\rangle_{q,t}'.
\]
\end{lem}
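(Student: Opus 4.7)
The plan is to unpack the definition of $\langle -,-\rangle'_{q,t}$, reduce to ordinary Hall adjunction on $\Lambda^{\otimes r}$, and then repackage via a chain-rule identity for skewing operators under matrix plethysms. Set $M \coloneq (1-q\sigma^{-1})(t\sigma-1)$, so that $\langle f,g\rangle'_{q,t} = \langle f[\iota X^\bullet],\, g[MX^\bullet]\rangle$. Since $\iota$ acts on each tensorand as a ring homomorphism sending $p_n[X^{(j)}]$ to $p_n[X^{(-j)}]$, it commutes with multiplication and carries $\Omega[X^{(i)}]$ to $\Omega[X^{(-i)}]$. So the left-hand side equals
\[
\bigl\langle \Omega[X^{(-i)}] \cdot f[\iota X^\bullet],\; g[MX^\bullet]\bigr\rangle.
\]

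Next I would apply Hall adjunction. Since multiplication by $p_n[X^{(-i)}]$ is the Hall-adjoint of $p_n^\perp[X^{(-i)}]$, and the $p_n[X^{(-i)}]$'s (respectively the skewing operators) mutually commute, exponentiation shows that multiplication by $\Omega[X^{(-i)}]$ is Hall-adjoint to $\TT[X^{(-i)}]$. Thus the above pairing becomes
\[
\bigl\langle f[\iota X^\bullet],\; \TT[X^{(-i)}]\bigl(g[MX^\bullet]\bigr)\bigr\rangle.
\]

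The remaining ingredient is the chain-rule identity
\[
\TT[X^{(l)}]\bigl(g[MX^\bullet]\bigr) = \bigl(\TT[M^T X^{(l)}]\,g\bigr)[MX^\bullet]
\]
for any matrix plethysm $M$. Treating $g$ as a polynomial in the $p_k[X^{(j)}]$'s, the ordinary chain rule applied to the substitution $p_k[X^{(j)}] \mapsto \sum_m M_{m,j}(q^k,t^k)\,p_k[X^{(m)}]$ gives
\[
p_k^\perp[X^{(l)}]\bigl(g[MX^\bullet]\bigr) = \sum_{j} M_{l,j}(q^k,t^k)\,\bigl(p_k^\perp[X^{(j)}]\,g\bigr)[MX^\bullet],
\]
which, interpreting $p_k^\perp[M^T X^{(l)}] \coloneq \sum_j M_{l,j}(q^k,t^k)\,p_k^\perp[X^{(j)}]$ in analogy with the multiplicative case, is exactly $\bigl(p_k^\perp[M^T X^{(l)}]\,g\bigr)[MX^\bullet]$; exponentiation then yields the identity. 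Finally, using $\sigma^T = \sigma^{-1}$ I would compute $M^T = (t\sigma^{-1}-1)(1-q\sigma) = (1-q\sigma)(t\sigma^{-1}-1)$ (the two factors commute as polynomials in $\sigma$), and re-identifying the outer pairing as $\langle-,-\rangle'_{q,t}$ delivers the right-hand side.

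The main obstacle is the chain-rule identity and the compatible definition of matrix plethysm on skewing operators described above; once this bookkeeping is settled and one is satisfied that all exponential series converge in the appropriate sense (which they do since $f$ and $g$ are of finite degree), the rest is a formal unwinding of the definitions.
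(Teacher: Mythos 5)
Your argument is correct: the reduction to Hall adjunction via $\iota$, the identification of $\Omega[X^{(-i)}]^\perp$ with $\TT[X^{(-i)}]$, the chain-rule identity $\TT[X^{(l)}]\bigl(g[MX^\bullet]\bigr)=\bigl(\TT[M^TX^{(l)}]g\bigr)[MX^\bullet]$, and the computation $M^T=(1-q\sigma)(t\sigma^{-1}-1)$ all check out against the paper's conventions for matrix plethysm and the modified pairing. The paper itself only cites this as Lemma 3.10 of [RW] without reproving it, but your derivation is the standard unwinding of the definitions that the cited source uses, so there is nothing further to compare.
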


\subsubsection{Dagger polynomials}
For $r\ge 3$, $\{H_\lambda\}$ are no longer orthogonal with respect to $\langle -,-\rangle_{q,t}'$.
A dual orthogonal basis is given by the ``dagger'' polynomials:
\[
H_\lambda^\dagger[X^\bullet;q,t]\coloneq H_\lambda[-\iota X^\bullet; q^{-1},t^{-1}].
\]

\begin{prop}[\protect{\cite[Proposition 2.12]{RW}}]\label{DaggerProp}
For $\lambda$ and $\mu$ with $\core(\lambda)=\core(\mu)$, $\langle H_\lambda^\dagger,H_\mu\rangle_{q,t}'$ is nonzero if and only if $\lambda=\mu$.
\end{prop}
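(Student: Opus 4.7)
The plan is to obtain two complementary expansions of $\langle H_\lambda^\dagger,H_\mu\rangle'_{q,t}$: one forcing the dominance inequality $\mu\le\lambda$, the other forcing $\lambda\le\mu$. By antisymmetry of dominance, both can hold simultaneously only when $\lambda=\mu$, and at $\lambda=\mu$ the pairing is nonzero because only the diagonal term survives and the leading triangularity coefficients are nonzero by construction.

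By the symmetry $\langle H_\lambda^\dagger,H_\mu\rangle'=\langle H_\mu,H_\lambda^\dagger\rangle'$ one may place the dagger polynomial in the second slot. Combining $H_\lambda^\dagger[Y^\bullet]=H_\lambda[-\iota Y^\bullet;q^{-1},t^{-1}]$ with the commutations $\iota\sigma^{\pm1}=\sigma^{\mp1}\iota$ yields
\[
H_\lambda^\dagger\bigl[(1-q\sigma^{-1})(t\sigma-1)X^\bullet\bigr]=H_\lambda\bigl[(1-q\sigma)(1-t\sigma^{-1})\iota X^\bullet;q^{-1},t^{-1}\bigr].
\]
Applying the second defining triangularity of $H_\lambda$ at $(q^{-1},t^{-1})$---whose factor is $(1-t\sigma^{-1})$---with $Z^\bullet=(1-q\sigma)\iota X^\bullet$ (observing that $(1-t\sigma^{-1})$ and $(1-q\sigma)$ commute as matrix plethysms), we obtain
\[
H_\lambda^\dagger\bigl[(1-q\sigma^{-1})(t\sigma-1)X^\bullet\bigr]\in\mathrm{span}\bigl\{s_{\quot(\nu)}[(1-q\sigma)\iota X^\bullet]:\nu\le\lambda,\,\core(\nu)=\core(\lambda)\bigr\}.
\]
Coupling this with the first defining triangularity of $H_\mu$ via the adjoint property of matrix plethysms (using $\iota^T=\iota$, $\sigma^T=\sigma^{-1}$) and the identity $(1-q\sigma)\iota=\iota(1-q\sigma^{-1})$, the pairing collapses to a sum supported on $\mu\le\nu\le\lambda$, which vanishes unless $\mu\le\lambda$.

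For the second expansion, the crucial operator identity (verified by direct expansion)
\[
(1-q\sigma)(1-t\sigma^{-1})=qt(1-q^{-1}\sigma^{-1})(1-t^{-1}\sigma)
\]
allows us to apply the \emph{first} triangularity of $H_\lambda$ at $(q^{-1},t^{-1})$ with argument $Z^\bullet=qt(1-t^{-1}\sigma)\iota X^\bullet$, giving
\[
H_\lambda^\dagger\bigl[(1-q\sigma^{-1})(t\sigma-1)X^\bullet\bigr]\in\mathrm{span}\bigl\{s_{\quot(\nu)}[qt(1-t^{-1}\sigma)\iota X^\bullet]:\nu\ge\lambda\bigr\}.
\]
Using $(1-t^{-1}\sigma)\iota=\iota(1-t^{-1}\sigma^{-1})$ together with the second triangularity of $H_\mu$ in an analogous adjoint computation expresses the pairing as a sum supported on $\lambda\le\nu\le\mu$, forcing $\lambda\le\mu$. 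The two directions together force $\lambda=\mu$.

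The main obstacle is the bookkeeping of matrix plethystic compositions. The key is locating the reciprocity identity above, which is what enables a second expansion via triangularity (i) compatible with the pairing weight $(1-q\sigma^{-1})(t\sigma-1)$; the remainder is careful tracking of commutations with $\iota$ and of transposes under the Hall pairing to match the induced triangularity shapes against the pairing's substitution.
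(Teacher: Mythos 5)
This proposition is quoted from \cite[Proposition~2.12]{RW}; the paper contains no proof of its own, so there is nothing internal to compare against. Your strategy --- playing the two defining triangularities of $H_\lambda$ (at inverted parameters) against those of $H_\mu$, using the reciprocity identity $(1-q\sigma)(1-t\sigma^{-1})=qt(1-q^{-1}\sigma^{-1})(1-t^{-1}\sigma)$ to get the second expansion, and concluding by antisymmetry of dominance --- is the right one and is essentially the standard argument. The identity you isolate is indeed the crux.

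There is, however, a concrete bookkeeping error that, taken literally under the paper's conventions, derails the matching with the triangularity factors. Substitution composes so that the \emph{outer} matrix plethysm multiplies on the \emph{left}: $(f[AX^\bullet])[BX^\bullet]=f[BAX^\bullet]$ (this is what the paper's example $f[\sigma^k\iota D_\lambda^\bullet]$ encodes). Hence with $N=(1-q\sigma^{-1})(t\sigma-1)$ one gets $H_\lambda^\dagger[NX^\bullet]=H_\lambda[-N\iota X^\bullet;q^{-1},t^{-1}]=H_\lambda[\iota(1-q\sigma)(1-t\sigma^{-1})X^\bullet;q^{-1},t^{-1}]$, with $\iota$ \emph{outermost}, whereas you wrote $(1-q\sigma)(1-t\sigma^{-1})\iota X^\bullet$, i.e.\ the composite $-\iota N$. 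The two differ by conjugation by $\iota$, which flips every $\sigma\mapsto\sigma^{-1}$; with your placement the innermost factor is $\iota$ rather than $(1-t\sigma^{-1})$, and after the adjunction you land on $H_\mu[(1-q\sigma)X^\bullet]$ and $H_\mu[qt(1-t^{-1}\sigma)X^\bullet]$ instead of the triangularity factors $(1-q\sigma^{-1})$ and $(1-t^{-1}\sigma^{-1})$, so neither defining condition applies. With the corrected placement everything closes: peeling off the innermost $(1-t\sigma^{-1})$ gives $\mathrm{span}\{s_{\quot(\nu)}[\iota(1-q\sigma)X^\bullet]:\nu\le\lambda\}$, and $(\iota(1-q\sigma))^T\iota=(1-q\sigma^{-1})$ hits triangularity (i) of $H_\mu$; the second direction works analogously. (A way to avoid the $\iota$ gymnastics entirely: keep $H_\lambda^\dagger$ in the first slot, note $H_\lambda^\dagger[\iota X^\bullet]=H_\lambda[-X^\bullet;q^{-1},t^{-1}]$ since $\iota^2=1$, and split $N$ two ways on the $H_\mu$ side.) Finally, ``the leading triangularity coefficients are nonzero by construction'' deserves a word: the definition does not assert this directly; it follows because $\{H_\lambda\}$ and $\{s_{\quot(\nu)}\}$ (precomposed with the invertible plethysms of Lemma~\ref{PlethInv}) are both bases, so the triangular transition matrices have nonzero diagonal.
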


The inner product $\langle H_\lambda^\dagger, H_\lambda\rangle_{q,t}'$ was computed in \cite{OSWreath}.
We will give an independent derivation in this paper.

\subsubsection{Nabla operator}
We will define a nabla operator $\nabla_\alpha$ for each $r$-core $\alpha$.
For $\lambda$ with $\core(\lambda)=\alpha$, we set 
\[
\nabla_\alpha H_\lambda= \left( \prod_{\substack{\square\in\lambda\backslash\alpha\\ \bar{c}_\square=0}}-\chi_\square \right)H_\lambda.
\]
Let $\nabla^\dagger_\alpha$ denote the adjoint of $\nabla$ with respect to $\langle -, -\rangle_{q,t}'$.
A corollary of Proposition \ref{DaggerProp} is that
\[
\nabla_\alpha^{\dagger} H_\lambda^\dagger= \left( \prod_{\substack{\square\in\lambda\backslash\alpha\\ \bar{c}_\square=0}}-\chi_\square \right)H_\lambda^\dagger.
\]

\begin{rem}
In \cite{RW}, it was shown that $\nabla_\alpha^\dagger=\nabla_{w_0\alpha}$, where $w_0\alpha$ is defined using an action of the symmetric group $\Sigma_r$ on partitions.
This characterization will not be important in this paper, but we mention this to facilitate comparing results from \cite{RW} to how we use them here.
\end{rem}

\section{Ext operator}\label{ExtOp}

\subsection{Tesler identity}
One can frame the work of Carlsson, Nekrasov and Okounkov \cite{CNO} in the $r=1$ case in terms of the \textit{Tesler identity} \cite{GHT}.
This was done implicitly in \cite{MellInt}.
We outline this approach for the wreath case.

\subsubsection{Delta functions}
Define the series
\begin{align*}
\mathbb{E}_\lambda&\coloneq \Omega\left[ \sum_{i\in\ZZ/r\ZZ}X^{(i)}\left( \frac{D_\lambda}{(1-q)(t-1)} \right)^{(i)} \right]\\
\mathbb{E}_\lambda^*&\coloneq \Omega\left[ \sum_{i\in\ZZ/r\ZZ} X^{(i)}\left( \frac{-\overline{D}_\lambda}{(1-q^{-1})(t^{-1}-1)} \right)^{(i)} \right].
\end{align*}
They are delta functions with respect to the pairing $\langle -, -\rangle_{q,t}'$.
\begin{prop}\label{DeltaProp}
For any $f\in\Lambda_{q,t}^{\otimes r}$, we have
\begin{align}
\label{Delta}
\langle f, \mathbb{E}_\lambda\rangle_{q,t}'&= f[\iota D_\lambda^\bullet],\\
\label{DeltaStar}
\langle f, \mathbb{E}_\lambda^*\rangle_{q,t}'&= f[-qt\iota \overline{D}_\lambda^\bullet].
\end{align}
\end{prop}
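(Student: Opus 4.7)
The strategy is to reduce both identities to the Cauchy reproducing kernel for the multi-symmetric Hall pairing. Concretely, since $\Omega\!\left[\sum_i X^{(i)} Z^{(i)}\right] = \sum_{\vec\nu} s_{\vec\nu}[X^\bullet]\, s_{\vec\nu}[Z^\bullet]$ and the multi-Schur functions are an orthonormal basis, one has $\langle h[X^\bullet],\Omega[\sum_i X^{(i)} Z^{(i)}]\rangle = h[Z^\bullet]$ for every $h\in\Lambda_{q,t}^{\otimes r}$ and every color-partitioned scalar input $Z^\bullet$. Unwinding the definition of the modified pairing gives
\[
\langle f,\mathbb{E}_\lambda\rangle_{q,t}' = \bigl\langle f[\iota X^\bullet],\; \mathbb{E}_\lambda[(1-q\sigma^{-1})(t\sigma-1)X^\bullet]\bigr\rangle,
\]
so it suffices to verify that the matrix plethysm transforms $\mathbb{E}_\lambda$ into $\Omega[\sum_j X^{(j)} D_\lambda^{(j)}]$. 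The reproducing kernel will then yield \eqref{Delta}, and an essentially identical calculation will yield \eqref{DeltaStar}.

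For the plethystic calculation, I write $\mathbb{E}_\lambda = \exp\!\bigl(\sum_{n\ge 1}\tfrac{1}{n}\sum_i p_n[Y_\lambda^{(i)}]\,p_n[X^{(i)}]\bigr)$ with $Y_\lambda^{(i)} = (D_\lambda/((1-q)(t-1)))^{(i)}$. The matrix $M = (1-q\sigma^{-1})(t\sigma-1)$ expands to $t\sigma + q\sigma^{-1} - (1+qt)\,I$, so the plethysm replaces each $p_n[X^{(i)}]$ by $t^n p_n[X^{(i+1)}] + q^n p_n[X^{(i-1)}] - (1+q^n t^n) p_n[X^{(i)}]$. Re-indexing the sum by the new $X^{(j)}$-slot, the coefficient of $p_n[X^{(j)}]$ becomes $t^n p_n[Y_\lambda^{(j-1)}] + q^n p_n[Y_\lambda^{(j+1)}] - (1+q^n t^n) p_n[Y_\lambda^{(j)}]$. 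Using the color-shift conventions $(tS)^{(j)} = tS^{(j-1)}$ and $(qS)^{(j)} = qS^{(j+1)}$ from \ref{CharSum}, this collapses into $p_n\bigl[\bigl((t+q-1-qt)D_\lambda/((1-q)(t-1))\bigr)^{(j)}\bigr]$, and the cancellation $t+q-1-qt = (1-q)(t-1)$ produces $p_n[D_\lambda^{(j)}]$, so indeed $\mathbb{E}_\lambda[MX^\bullet] = \Omega[\sum_j X^{(j)} D_\lambda^{(j)}]$.

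The case of $\mathbb{E}_\lambda^*$ is handled by the same manipulation with $Y_\lambda^{(i)}$ replaced by $-(\overline{D}_\lambda/((1-q^{-1})(t^{-1}-1)))^{(i)}$. The cancellation now leaves the overall prefactor $(1-q)(1-t)/((1-q^{-1})(t^{-1}-1))$, which using the elementary identities $(1-q)/(1-q^{-1}) = -q$ and $(1-t)/(t^{-1}-1) = t$ simplifies to $-qt$; thus $\mathbb{E}_\lambda^*[MX^\bullet] = \Omega[\sum_j X^{(j)}(-qt\,\overline{D}_\lambda)^{(j)}]$ and the reproducing kernel delivers \eqref{DeltaStar}. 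The only real obstacle is bookkeeping: the color-shift conventions for $\sigma^{\pm 1}$, the color-grading of scalar multiplications by $q$ and $t$, and the implicit transposition when transporting a matrix plethysm across the Hall pairing must all be kept consistent. Once aligned, the required cancellations are immediate.
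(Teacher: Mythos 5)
Your proof is correct, but it takes a different route from the paper: the paper disposes of both identities by citation, quoting \eqref{Delta} as Corollary~4.10 of \cite{RW} and \eqref{DeltaStar} as the $k=0$ case of Proposition~5.12 of \cite{RW}, together with a warning that the $\overline{(\,\cdot\,)}$ convention here differs from the $(-)_*$ convention of \cite{RW}. You instead give a self-contained derivation from the Cauchy reproducing kernel, and the computation checks out: the expansion $M=(1-q\sigma^{-1})(t\sigma-1)=t\sigma+q\sigma^{-1}-(1+qt)I$ is right, the reindexing of the coefficient of $p_n[X^{(j)}]$ to $t^np_n[Y_\lambda^{(j-1)}]+q^np_n[Y_\lambda^{(j+1)}]-(1+q^nt^n)p_n[Y_\lambda^{(j)}]$ correctly implements the color-shift conventions $(tS)^{(j)}=tS^{(j-1)}$ and $(qS)^{(j)}=qS^{(j+1)}$, the cancellation $t+q-1-qt=(1-q)(t-1)$ kills the denominator of $\mathbb{E}_\lambda$, and the prefactor $(1-q)(1-t)/\bigl((1-q^{-1})(t^{-1}-1)\bigr)=-qt$ for $\mathbb{E}_\lambda^*$ is also correct (the sign from $-(1-q)(1-t)$ cancels against the explicit minus in the definition of $\mathbb{E}_\lambda^*$). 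The final step, evaluating $f[\iota X^\bullet]$ at $X^{(j)}\mapsto D_\lambda^{(j)}$ (resp.\ $(-qt\overline{D}_\lambda)^{(j)}$) to land on $f[\iota D_\lambda^\bullet]$ (resp.\ $f[-qt\iota\overline{D}_\lambda^\bullet]$), matches the paper's convention of applying the matrix plethysm before the vector evaluation. What your approach buys is independence from \cite{RW} and, in particular, immunity to the convention mismatch the paper has to flag; what the citation buys is brevity and consistency with the rest of the paper, which leans on \cite{RW} throughout. If you were to polish this, the only things worth adding are a sentence that the pairing of a fixed $f$ with the completed element $\mathbb{E}_\lambda$ is computed degree by degree, and an explicit remark that $qt$ and $-1$ preserve color so that $(-qt\overline{D}_\lambda)^{(-i)}=-qt\,\overline{D}_\lambda^{(-i)}$ in the last step.
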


\begin{proof}
Equation \eqref{Delta} is \cite[Corollary 4.10]{RW} and equation \eqref{DeltaStar} is the $k=0$ case of \cite[Proposition 5.12]{RW}.
For the latter, we recall that in \ref{CharSum}, our $\overline{(-)}$ operation assigns color differently than the $(-)_*$ operation from \cite{RW}. 
\end{proof}

\subsubsection{The maps $\mathsf{V}$ and $\mathsf{V}^*$}
The following give wreath analogues of the universal sheaf operators from \cite{CNO}:
for each $r$-core $\alpha$, we define
\begin{align*}
\mathsf{V}_\alpha&\coloneq \nabla_\alpha\Omega\left[ \frac{X^{(0)}}{(1-q\sigma^{-1})(t\sigma-1)} \right]\TT\left[X^{(0)}\right]\\
\mathsf{V}_\alpha^*&\coloneq \left(\nabla_\alpha^\dagger\right)^{-1}\Omega\left[ \frac{-qtX^{(0)}}{(1-q\sigma^{-1})(t\sigma-1)} \right]\TT\left[ -X^{(0)} \right].
\end{align*}
The following is a slight rewriting of Theorem 1.1 and Proposition 5.14 of \cite{RW}.
\begin{thm}\label{Tesler}
For $\lambda$ with $\core(\lambda)=\alpha$, we have
\begin{align*}
\mathsf{V}_\alpha H_\lambda &= \mathbb{E}_\lambda,\\
\mathsf{V}_\alpha^*H_\lambda^\dagger &= \mathbb{E}_\lambda^*.
\end{align*}
\end{thm}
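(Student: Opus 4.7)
The plan is to verify both identities using the delta-function characterization from Proposition \ref{DeltaProp}: in the degree-completion of $\Lambda_{q,t}^{\otimes r}$, the functional $f \mapsto f[\iota D_\lambda^\bullet]$ uniquely determines $\mathbb{E}_\lambda$, and analogously for $\mathbb{E}_\lambda^*$. Thus for the first identity, it suffices to check that
$$\langle f, \mathsf{V}_\alpha H_\lambda\rangle_{q,t}' = f[\iota D_\lambda^\bullet]$$
for every $f \in \Lambda_{q,t}^{\otimes r}$. The strategy is to move each factor of $\mathsf{V}_\alpha$ across the pairing and reduce to an identity one can attack via the defining properties of $H_\lambda$. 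The prefactor $\nabla_\alpha$ is the easiest piece: by expanding $f$ in the dual basis $\{H_\mu^\dagger\}_{\core(\mu)=\alpha}$, its adjoint $\nabla_\alpha^\dagger$ contributes the same scalar that $\nabla_\alpha$ produces on $H_\lambda$, so this factor commutes through after relabeling. Lemma \ref{AdjointLem} then exchanges the $\Omega$ and $\TT$ factors, so that $\TT[X^{(0)}]$ in $\mathsf{V}_\alpha$ becomes an $\Omega\left[(1-q\sigma)(t\sigma^{-1}-1)X^{(0)}\right]$ acting on $f$, while the $\Omega\left[X^{(0)}/((1-q\sigma^{-1})(t\sigma-1))\right]$ produces a $\TT[X^{(0)}]$ on $f$.

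After these manipulations the desired equality takes the shape
$$\langle \mathcal{O} f, H_\lambda\rangle_{q,t}' = f[\iota D_\lambda^\bullet]$$
for a specific combined $\Omega\TT$-operator $\mathcal{O}$, and this is essentially the content of the wreath Tesler identity \cite[Theorem 1.1]{RW}. Its proof there rests on the two triangularity conditions defining $H_\lambda$ together with the quantum toroidal eigenvector description of wreath Macdonald polynomials from \cite{WreathEigen}, with the key combinatorial input being the color-graded identity relating $D_\lambda$ to addable/removable boxes already invoked in the proof of Lemma \ref{NekExp}. This post-adjunction reduction is where all the nontrivial wreath Macdonald input concentrates; the remaining bookkeeping, namely checking that Lemma \ref{AdjointLem} delivers the plethystic factors in exactly the form appearing in $\mathbb{E}_\lambda$, is routine.

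The second identity $\mathsf{V}_\alpha^* H_\lambda^\dagger = \mathbb{E}_\lambda^*$ follows by the same scheme, now using \eqref{DeltaStar} in place of \eqref{Delta} and appealing to \cite[Proposition 5.14]{RW} at the analogous final step. Alternatively, one can exploit the intrinsic symmetry $H_\lambda^\dagger[X^\bullet;q,t] = H_\lambda[-\iota X^\bullet; q^{-1},t^{-1}]$: under the induced automorphism, $D_\lambda$ transforms (up to sign and a factor of $qt$) into $\overline{D}_\lambda$, which precisely accounts for the $-qt$ in \eqref{DeltaStar} and the substitution of $(\nabla_\alpha^\dagger)^{-1}$ for $\nabla_\alpha$ in the definition of $\mathsf{V}_\alpha^*$. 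The main obstacle in either route is the post-adjunction reduction step; once that is granted from \cite{RW}, the remainder is a matter of matching notation across the two setups.
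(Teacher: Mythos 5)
The paper offers no proof of this theorem at all: it is introduced as ``a slight rewriting of Theorem 1.1 and Proposition 5.14 of \cite{RW}'', so its entire content is imported from that reference after a notational translation. You do correctly identify \cite{RW} (Theorem 1.1 for the first identity, Proposition 5.14 for the second) as the source, and to that extent your proposal lands where the paper does. But the derivation you wrap around the citation is both unnecessary and flawed in one place. It is unnecessary because the cited results already \emph{are} the displayed identities up to conventions (the $\iota$ versus $(-)_*$ coloring, the identification $\nabla_\alpha^\dagger=\nabla_{w_0\alpha}$); there is no residual ``post-adjunction reduction'' left over once the notation is matched, so the detour through pairing against arbitrary $f$ and invoking Lemma \ref{AdjointLem} does no work.

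The flawed step is your treatment of $\nabla_\alpha$. Pairing $\mathsf{V}_\alpha H_\lambda$ against $H_\mu^\dagger$ and moving $\nabla_\alpha$ across the pairing produces the eigenvalue of $\nabla_\alpha^\dagger$ on $H_\mu^\dagger$, namely $\prod_{\square\in\mu\backslash\alpha,\,\bar{c}_\square=0}(-\chi_\square)$, which depends on $\mu$ and is \emph{not} ``the same scalar that $\nabla_\alpha$ produces on $H_\lambda$'' except when $\mu=\lambda$. Since $\Omega\left[X^{(0)}/((1-q\sigma^{-1})(t\sigma-1))\right]\TT\left[X^{(0)}\right]H_\lambda$ expands over many $\mu$ with $\core(\mu)=\alpha$ (by Lemma \ref{SupportLem}), the factor $\nabla_\alpha$ does not ``commute through after relabeling''; it supplies a genuinely $\mu$-dependent renormalization of each expansion coefficient, and this is essential for the equality with $\mathbb{E}_\lambda$ --- dropping or misplacing it changes the answer. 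So if one actually carried out your plan rather than falling back on \cite{RW} at the end, this step would fail as written. The honest one-line proof here is simply the translation of conventions between the two papers.
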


\subsection{Vertex operator}
Our goal here is to write the Nekrasov factor $N_{\lambda,\mu}(u)$ using a vertex operator.

\subsubsection{Nekrasov factor via $\mathsf{V}_\alpha$ and $\mathsf{V}_\alpha^*$}
Let $u^D$ denote the grading operator on $\Lambda^{\otimes r}$: for a homogeneous $f\in\Lambda^{\otimes r}$, we set
\[
u^D f= u^{\deg (f)}f.
\]
It clearly commutes with $\nabla_\alpha$.

\begin{lem}
For $\lambda$ and $\mu$ with $\core(\lambda)=\core(\mu)=\alpha$, we have
\begin{equation}
\left\langle \mathsf{V}_\alpha^*H_\mu^\dagger, u^D\mathsf{V}_\alpha H_\lambda\right\rangle_{q,t}'
=
\Omega\left[ \left(\frac{uqt}{(1-q)(1-t)}\right)^{(0)} \right]N_{\lambda,\mu}(u).
\label{VNek}
\end{equation}
\end{lem}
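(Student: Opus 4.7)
The plan is to reduce the inner product to an evaluation, using the fact that $\mathsf{V}_\alpha$ and $\mathsf{V}_\alpha^*$ convert wreath Macdonald polynomials into the delta series $\mathbb{E}_\lambda$ and $\mathbb{E}_\mu^*$. Concretely, Theorem~\ref{Tesler} gives
\[
\left\langle \mathsf{V}_\alpha^*H_\mu^\dagger, u^D\mathsf{V}_\alpha H_\lambda\right\rangle_{q,t}'
= \left\langle \mathbb{E}_\mu^*, u^D\mathbb{E}_\lambda \right\rangle_{q,t}'
= \left\langle u^D\mathbb{E}_\lambda,\mathbb{E}_\mu^* \right\rangle_{q,t}',
\]
where the last equality uses symmetry of $\langle-,-\rangle_{q,t}'$. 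By the delta property \eqref{DeltaStar} of $\mathbb{E}_\mu^*$, the right-hand side equals $(u^D\mathbb{E}_\lambda)[-qt\iota\overline{D}_\mu^\bullet]$.

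Next, I would unwind the $u^D$ action. Since $u^D$ acts by $u^n$ on each degree-$n$ part, it is the scalar plethysm $X^{(i)}\mapsto uX^{(i)}$, so composing it with the evaluation $X^{(i)}\mapsto -qt\,\overline{D}_\mu^{(-i)}$ produces the substitution $p_n[X^{(i)}]\mapsto p_n[-uqt\,\overline{D}_\mu^{(-i)}]$. Applying this to $\mathbb{E}_\lambda=\Omega[\sum_i X^{(i)}(D_\lambda/((1-q)(t-1)))^{(i)}]$ and exploiting additivity of color on products, the sum $\sum_i \overline{D}_\mu^{(-i)}\bigl(D_\lambda/((1-q)(t-1))\bigr)^{(i)}$ is precisely the color-$0$ part of $D_\lambda\overline{D}_\mu/((1-q)(t-1))$. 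After simplifying the sign $-qt/((1-q)(t-1))=qt/((1-q)(1-t))$, this yields
\[
(u^D\mathbb{E}_\lambda)[-qt\iota\overline{D}_\mu^\bullet]=\Omega\!\left[\left(\frac{uqt\,D_\lambda\overline{D}_\mu}{(1-q)(1-t)}\right)^{(0)}\right].
\]

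Finally I invoke Lemma~\ref{NekExp}, which rewrites the right-hand side of the target identity as
\[
\Omega\!\left[\left(\tfrac{uqt}{(1-q)(1-t)}\right)^{(0)}\right]N_{\lambda,\mu}(u)
=\Omega\!\left[\left(\tfrac{uqt\,D_\lambda\overline{D}_\mu}{(1-q)(1-t)}\right)^{(0)}\right]
\]
via the multiplicativity $\Omega[A+B]=\Omega[A]\Omega[B]$, matching what we obtained above and closing the argument.

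The main obstacle, and really the only delicate point, is bookkeeping of the color conventions: one must keep straight that $\iota$ sends color $i$ to $-i$, that $qt$ is color preserving, that the $\overline{(-)}$ operation here differs in sign convention from the $(-)_*$ of \cite{RW}, and that the color-$0$ projection of a product picks out exactly the pairs $(i,-i)$ on the two factors. Once these are aligned, the computation is just a direct unwinding of definitions together with the delta identity \eqref{DeltaStar} and Lemma~\ref{NekExp}.
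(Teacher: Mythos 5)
Your proposal is correct and follows essentially the same route as the paper's own proof: apply Theorem~\ref{Tesler}, use the delta property \eqref{DeltaStar} together with the symmetry of the pairing to reduce to an evaluation of $u^D\mathbb{E}_\lambda$, observe that the resulting sum over colors is the color-$0$ part of $uqt\,D_\lambda\overline{D}_\mu/((1-q)(1-t))$, and conclude with Lemma~\ref{NekExp}. The bookkeeping you flag (the $\iota$ twist, the sign $-qt/((1-q)(t-1))=qt/((1-q)(1-t))$, and additivity of color on products) is handled exactly as in the paper.
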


\begin{proof}
Applying Theorem \ref{Tesler} and then \eqref{DeltaStar}, we have 
\begin{align*}
\left\langle \mathsf{V}_\alpha^*H_\mu^\dagger, u^D\mathsf{V}_\alpha H_\lambda\right\rangle_{q,t}'
&= \left\langle\mathbb{E}_\mu^*, \Omega\left[ \sum_{i\in\ZZ/r\ZZ}uX^{(i)}\left( \frac{D_\lambda}{(1-q)(t-1)} \right)^{(i)} \right]\right\rangle_{q,t}'\\
&= \Omega\left[ \sum_{i\in\ZZ/r\ZZ}uqt\overline{D}_{\mu}^{(-i)}\left( \frac{D_\lambda}{(1-q)(1-t)} \right)^{(i)} \right].
\end{align*}
Observe now that
\[
\sum_{i\in\ZZ/r\ZZ}uqt\overline{D}_{\mu}^{(-i)}\left( \frac{D_\lambda}{(1-q)(1-t)} \right)^{(i)}
=\left(\frac{uqt D_\lambda\overline{D}_\mu}{(1-q)(1-t)}\right)^{(0)}.
\]
By Lemma \ref{NekExp}, we obtain the right-hand-side of \eqref{VNek} upon applying the plethystic exponential.
\end{proof}

\subsubsection{The operator $\mathsf{W}(u)$} 
Let $\left( \mathsf{V}_{\alpha}^* \right)^\dagger$ denote the adjoint of $\mathsf{V}_\alpha^*$ with respect to $\langle -, -\rangle_{q,t}'$.
Thus, we can reframe (\ref{VNek}) in terms of $\left( \mathsf{V}_\alpha^* \right)^\dagger u^D\mathsf{V}_\alpha$.

\begin{lem}
We have the equality
\begin{equation}
\left( \mathsf{V}_\alpha^* \right)^\dagger u^D\mathsf{V}_\alpha
=
\Omega\left[\left(\frac{uqt}{(1-q)(1-t)}\right)^{(0)}  \right]\Omega\left[ \frac{(u-1)X^{(0)}}{(1-q\sigma^{-1})(t\sigma-1)}\right]
\TT\left[ (u^{-1}-qt)X^{(0)} \right]u^D
\label{VStarV}
\end{equation}
\end{lem}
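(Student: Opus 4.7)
The plan is to take the adjoint of $\mathsf{V}_\alpha^*$, cancel the nabla operators via their commutation with $u^D$, push $u^D$ all the way to the right, and finally apply Lemma~\ref{OTComm} to move a $\TT$ past an $\Omega$, producing the scalar prefactor on the right-hand side of \eqref{VStarV}. To set things up, abbreviate $M\coloneq(1-q\sigma^{-1})(t\sigma-1)$ and write $\mathsf{V}_\alpha=\nabla_\alpha\Phi\Psi$ and $\mathsf{V}_\alpha^*=(\nabla_\alpha^\dagger)^{-1}\Phi^*\Psi^*$ with
\[
\Phi=\Omega[M^{-1}X^{(0)}],\;\Psi=\TT[X^{(0)}],\;\Phi^*=\Omega[-qtM^{-1}X^{(0)}],\;\Psi^*=\TT[-X^{(0)}].
\]
Since adjoint reverses order and $((\nabla_\alpha^\dagger)^{-1})^\dagger=\nabla_\alpha^{-1}$, we have $(\mathsf{V}_\alpha^*)^\dagger=(\Psi^*)^\dagger(\Phi^*)^\dagger\nabla_\alpha^{-1}$. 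Both $\nabla_\alpha$ and $u^D$ are diagonal on the basis $\{H_\lambda\mid\core(\lambda)=\alpha\}$, so they commute and the nabla operators cancel, reducing the left-hand side of \eqref{VStarV} to $(\Psi^*)^\dagger(\Phi^*)^\dagger u^D\Phi\Psi$.

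To compute $(\Phi^*)^\dagger$ and $(\Psi^*)^\dagger$, I would extend Lemma~\ref{AdjointLem} to $\Omega[EX^{(i)}]$ for a matrix plethysm $E$ by factoring $\Omega[EX^{(i)}]=\prod_j\Omega[E_{j,i}X^{(j)}]$ and applying the lemma factorwise, obtaining $\Omega[EX^{(i)}]^\dagger=\TT[M^T\iota EX^{(i)}]$. The identity $\iota M^T=M\iota$ (a consequence of $\iota\sigma\iota=\sigma^{-1}$) then yields $M^T\iota M^{-1}=\iota$, whence $(\Phi^*)^\dagger=\TT[-qtX^{(0)}]$, while inverting the same formula gives $(\Psi^*)^\dagger=\Omega[-M^{-1}X^{(0)}]$. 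Next, using $u^Dp_n[X^{(i)}]u^{-D}=u^np_n[X^{(i)}]$ and $u^Dp_n^\perp[X^{(i)}]u^{-D}=u^{-n}p_n^\perp[X^{(i)}]$, I would commute $u^D$ to the far right, yielding
\[
(\mathsf{V}_\alpha^*)^\dagger u^D\mathsf{V}_\alpha=\Omega[-M^{-1}X^{(0)}]\,\TT[-qtX^{(0)}]\,\Omega[uM^{-1}X^{(0)}]\,\TT[u^{-1}X^{(0)}]\,u^D.
\]

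Finally, Lemma~\ref{OTComm} with $A=-qt$, $B=M^{-1}$, $j=k=0$, $z=1$, $w=u$ swaps the middle $\TT$ past the middle $\Omega$ at the cost of a scalar $\Omega[-uqt(M^{-1})_{0,0}]$; combining the two adjacent $\Omega$'s into $\Omega[(u-1)M^{-1}X^{(0)}]$ and the two adjacent $\TT$'s into $\TT[(u^{-1}-qt)X^{(0)}]$ by additivity of their plethystic arguments then gives the desired form, provided that $-uqt(M^{-1})_{0,0}=\left(\frac{uqt}{(1-q)(1-t)}\right)^{(0)}$. For this last identification, expanding $M^{-1}=-\sum_{k,l\ge 0}q^kt^l\sigma^{l-k}$ and extracting the coefficient of $\sigma^0$ modulo $r$ (or equivalently applying Lemma~\ref{PlethInv}) gives $(M^{-1})_{0,0}=-\left(\frac{1}{(1-q)(1-t)}\right)^{(0)}$, from which the equality follows since $uqt$ has color $0$.

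The main obstacle is the adjoint computation: Lemma~\ref{AdjointLem} is phrased only for the basic case $\Omega[X^{(i)}]$, and the cleanliness of the whole argument rests on the collapse $M^T\iota M^{-1}=\iota$. Without this cancellation $(\Phi^*)^\dagger$ would involve a full matrix plethysm rather than just $\TT[-qtX^{(0)}]$, and the two $\Omega$'s (resp.\ $\TT$'s) in the penultimate display would fail to combine into the simple expression on the right-hand side of \eqref{VStarV}.
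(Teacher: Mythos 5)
Your proof is correct and follows essentially the same route as the paper: compute $(\mathsf{V}_\alpha^*)^\dagger$ from Lemma~\ref{AdjointLem} (using the collapse $M^T\iota M^{-1}=\iota$, which the paper leaves implicit), cancel the nablas against $u^D$, push $u^D$ to the right, and apply Lemma~\ref{OTComm} together with Lemma~\ref{PlethInv} to identify the scalar prefactor. Your intermediate factor $\TT\left[u^{-1}X^{(0)}\right]$ is the correct one; the paper's displayed intermediate line reads $\TT\left[uX^{(0)}\right]$, which is a typo, since only the $u^{-1}$ version is consistent with the stated right-hand side of \eqref{VStarV}.
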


\begin{proof}
First, we use Lemma \ref{AdjointLem} to compute
\[
\left( \mathsf{V}_\alpha^* \right)^\dagger=\Omega\left[ \frac{-X^{(0)}}{(1-q\sigma^{-1})(t\sigma-1)} \right]\TT\left[ -qtX^{(0)} \right]\nabla_\alpha^{-1}.
\]
Thus, we have
\begin{align*}
\left( \mathsf{V}_\alpha^* \right)^\dagger u^D\mathsf{V}_\alpha
&= \Omega\left[ \frac{-X^{(0)}}{(1-q\sigma^{-1})(t\sigma-1)} \right]\TT\left[ -qtX^{(0)} \right]\nabla_\alpha^{-1}
u^D
\nabla_\alpha
\Omega\left[ \frac{X^{(0)}}{(1-q\sigma^{-1})(t\sigma-1)} \right]\TT\left[X^{(0)}\right]\\
&= 
\Omega\left[ \frac{-X^{(0)}}{(1-q\sigma^{-1})(t\sigma-1)} \right]
\underbrace{\TT\left[ -qtX^{(0)} \right]
\Omega\left[ \frac{uX^{(0)}}{(1-q\sigma^{-1})(t\sigma-1)} \right]}
\TT\left[uX^{(0)}\right]
u^D.
\end{align*}
Using Lemma \ref{OTComm} to commute the braced terms, we obtain a a factor of
\[
\Omega\left[ \left(\frac{uqt}{(1-q\sigma^{-1})(1-t\sigma)}\right)_{0,0} \right].
\]
Applying Lemma \ref{PlethInv}, we have
\begin{align*}
uqt\left(\frac{1}{(1-q\sigma^{-1})(1-t\sigma)}\right)_{0,0}
&= uqt\left(\frac{\sum_{a,b=0}^{r-1}q^at^b\sigma^{-a+b}}{(1-q^r)(1-t^r)}\right)_{0,0}\\
&=\frac{uqt\sum_{a=0}^{r-1}(qt)^a}{(1-q^r)(1-t^r)}\\
&= \left( \frac{uqt}{(1-q)(1-t)} \right)^{(0)}.\qedhere
\end{align*}
\end{proof}

\begin{defn}
We define the Ext operator
\begin{align}
\mathsf{W}(u)
&\coloneq\Omega\left[ \frac{(1-u^{-1})X^{(0)}}{(1-q\sigma^{-1})(t\sigma-1)}\right]
\TT\left[ (1-uqt)X^{(0)} \right]
\label{WDef}\\
&\hphantom{:}=u^{-D}\left(\Omega\left[ \frac{(u-1)X^{(0)}}{(1-q\sigma^{-1})(t\sigma-1)}\right]
\TT\left[ (u^{-1}-qt)X^{(0)} \right]u^D  \right).
\nonumber
\end{align}
\end{defn}
Splicing together \eqref{VStarV} and \eqref{VNek}, we obtain the following corollary.
\begin{cor}\label{WNek}
$\mathsf{W}(u)$ reproduces the Nekrasov factor when $\core(\lambda)=\core(\mu)$:
\begin{equation}
\left\langle H_\mu^\dagger, \mathsf{W}(u)H_\lambda\right\rangle_{q,t}'=u^{-|\quot(\mu)|}N_{\lambda,\mu}(u).
\label{ExtPair}
\end{equation}
\end{cor}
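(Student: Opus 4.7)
The plan is to splice together the two preceding lemmas, \eqref{VNek} and \eqref{VStarV}, via the adjoint relation for $\mathsf{V}_\alpha^*$. Concretely, by the definition of $\left(\mathsf{V}_\alpha^*\right)^\dagger$, the left-hand side of \eqref{VNek} can be rewritten as
\[
\left\langle \mathsf{V}_\alpha^* H_\mu^\dagger,\, u^D \mathsf{V}_\alpha H_\lambda\right\rangle_{q,t}' = \left\langle H_\mu^\dagger,\, \left(\mathsf{V}_\alpha^*\right)^\dagger u^D \mathsf{V}_\alpha H_\lambda\right\rangle_{q,t}'.
\]
Then \eqref{VStarV} expresses the operator inside this pairing as the scalar $\Omega\!\left[\bigl(\tfrac{uqt}{(1-q)(1-t)}\bigr)^{(0)}\right]$ times the composition $\Omega\!\left[\frac{(u-1)X^{(0)}}{(1-q\sigma^{-1})(t\sigma-1)}\right]\TT\!\left[(u^{-1}-qt)X^{(0)}\right]u^D$, which by the second equality in \eqref{WDef} is exactly $u^D\mathsf{W}(u)$.

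Next, I would pull the factor $u^D$ outside the pairing. This uses the fact that $H_\mu^\dagger$ is homogeneous of degree $|\quot(\mu)|$ and that the pairing $\langle -,-\rangle_{q,t}'$ vanishes on components of unequal degree, because the matrix plethysm $(1-q\sigma^{-1})(t\sigma-1)$ and the permutation $\iota$ appearing in the definition of the pairing both preserve degree. Consequently, the only contribution to $\langle H_\mu^\dagger, u^D\mathsf{W}(u)H_\lambda\rangle_{q,t}'$ comes from the degree $|\quot(\mu)|$ component of $\mathsf{W}(u)H_\lambda$, on which $u^D$ acts by the scalar $u^{|\quot(\mu)|}$.

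Assembling these ingredients, the scalar factor $\Omega\!\left[\bigl(\tfrac{uqt}{(1-q)(1-t)}\bigr)^{(0)}\right]$ on both sides of the resulting equation cancels, leaving
\[
u^{|\quot(\mu)|}\left\langle H_\mu^\dagger,\, \mathsf{W}(u)H_\lambda\right\rangle_{q,t}' = N_{\lambda,\mu}(u),
\]
which is the desired identity \eqref{ExtPair} after dividing through by $u^{|\quot(\mu)|}$. There is no genuine obstacle here: all the substantive input — the Tesler identity of Theorem \ref{Tesler}, the plethystic form of the Nekrasov factor in Lemma \ref{NekExp}, and the commutation relation of Lemma \ref{OTComm} — has already been absorbed into \eqref{VNek} and \eqref{VStarV}, so what remains is a short bookkeeping argument.
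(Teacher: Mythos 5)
Your argument is correct and is precisely the ``splicing'' the paper intends: rewrite the left side of \eqref{VNek} via the adjoint as $\langle H_\mu^\dagger, (\mathsf{V}_\alpha^*)^\dagger u^D\mathsf{V}_\alpha H_\lambda\rangle_{q,t}'$, substitute \eqref{VStarV} together with the second equality of \eqref{WDef}, extract $u^{|\quot(\mu)|}$ using homogeneity of $H_\mu^\dagger$ and the degree-preserving nature of the pairing, and cancel the common plethystic prefactor. No gaps.
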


Notice that we have no problem setting $u=1$ in the definition of $\mathsf{W}(u)$ and the Nekrasov factor in \eqref{ExtPair}.

\begin{cor}[\protect{\cite[Theorem~3.32]{OSWreath}}]\label{Norm}
We have 
\[\langle H_\lambda^\dagger, H_\lambda\rangle_{q,t}'=N_{\lambda,\lambda}(1)
=
\prod_{\substack{\square\in\lambda\\ h_{\lambda}(\square)\equiv 0\:\mathrm{mod}\: r}}
\left( 1-q^{-a_\lambda(\square)}t^{l_\lambda(\square)+1} \right)
\left( 1-q^{a_\lambda(\square)+1}t^{-l_\lambda(\square)} \right).
\]
\end{cor}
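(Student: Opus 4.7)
The plan is to specialize Corollary~\ref{WNek} at $\mu=\lambda$ and $u=1$. Since $(1-u^{-1})$ vanishes at $u=1$, the first factor in \eqref{WDef} becomes $\Omega[0]=1$, so $\mathsf{W}(1)=\TT[(1-qt)X^{(0)}]$; meanwhile the prefactor $u^{-|\quot(\mu)|}$ in \eqref{ExtPair} collapses to $1$. This reduces \eqref{ExtPair} to
\[
\langle H_\lambda^\dagger, \TT[(1-qt)X^{(0)}]H_\lambda\rangle_{q,t}' = N_{\lambda,\lambda}(1),
\]
so it remains only to check that $\TT[(1-qt)X^{(0)}]$ may be replaced by the identity inside the pairing.

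Unpacking the definitions in Subsection~\ref{OT}, the operator $\TT[(1-qt)X^{(0)}]$ acts on $H_\lambda$ as the plethystic substitution
\[
H_\lambda[X^\bullet]\longmapsto H_\lambda\bigl[X^{(0)}+(1-qt),X^{(1)},\ldots,X^{(r-1)}\bigr].
\]
Equivalently, $\TT[cX^{(0)}]=\exp\bigl(\sum_{k>0}p_k[c]\,p_k^\perp[X^{(0)}]/k\bigr)$ is the identity plus a sum of strictly degree-lowering operators. Since $H_\lambda$ is homogeneous of degree $|\quot(\lambda)|$, this yields
\[
\TT[(1-qt)X^{(0)}]H_\lambda = H_\lambda + \bigl(\text{terms of degree strictly less than }|\quot(\lambda)|\bigr).
\]

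Finally, the pairing $\langle-,-\rangle_{q,t}'$ respects the grading on $\Lambda_{q,t}^{\otimes r}$: both plethysms $\iota$ and $(1-q\sigma^{-1})(t\sigma-1)$ appearing in its definition preserve degree, and the Hall pairing is graded. Because $H_\lambda^\dagger$ is also homogeneous of degree $|\quot(\lambda)|$, it kills the lower-degree tail, so
\[
\langle H_\lambda^\dagger, \mathsf{W}(1)H_\lambda\rangle_{q,t}' = \langle H_\lambda^\dagger, H_\lambda\rangle_{q,t}'.
\]
Combined with the first display, this yields $\langle H_\lambda^\dagger, H_\lambda\rangle_{q,t}'=N_{\lambda,\lambda}(1)$, and the explicit hook-product is just \eqref{Eq_NF} at $\mu=\lambda$, $u=1$, using $h_{\lambda,\lambda}(\square)=h_\lambda(\square)$. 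There is no serious obstacle; the only point requiring care is the triangularity of $\TT[(1-qt)X^{(0)}]$ on the graded basis, which is immediate from its exponential presentation.
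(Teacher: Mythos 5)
Your proposal is correct and follows essentially the same route as the paper: specialize Corollary~\ref{WNek} at $u=1$, $\mu=\lambda$, observe that $\mathsf{W}(1)=\TT[(1-qt)X^{(0)}]$, and use that this operator is the identity plus strictly degree-lowering terms which are killed by the graded pairing against the homogeneous $H_\lambda^\dagger$. The paper states this last point more tersely (``the summand of highest degree \ldots is $H_\lambda$''), but the content is identical.
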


\begin{proof}
Setting $u=1$ and $\lambda=\mu$ in \eqref{ExtPair}, we obtain
\[
\left\langle H_\lambda^\dagger, \TT\left[ (1-qt)X^{(0)} \right]H_\lambda\right\rangle_{q,t}'=N_{\lambda,\lambda}(1).
\]
The summand of highest degree in $\TT\left[ (1-qt)X^{(0)} \right]H_\lambda$ is $H_\lambda$.
\end{proof}

\begin{rem}
One can come up with operators that produce \eqref{ExtPair} when $\core(\lambda)\not=\core(\mu)$.
Geometrically, this corresponds to the Ext correspondence over a pair of moduli of sheaves on the $A_{r-1}$ surface with different stability conditions.
However, the operator is not as nice because $(\nabla_{\core(\mu)}^\dagger)^{-1}$ and $\nabla_{\core(\lambda)}$ no longer cancel.
\end{rem}

\subsection{Pieri rules}
Following the idea in the proof of Corollary \ref{Norm}, we can specialize $u$ to express certain Pieri rules in terms of Nekrasov factors.
We note that these are not wreath analogues of the Pieri rules from \cite[Chapter VI]{Mac}.
It is still an open problem to find manicured formulas for the latter \cite{OSW, WreathOrth}.

\subsubsection{Support}
Let $\mu\subset_k\lambda$ denote that:
\begin{itemize}
\item $\mu\subset\lambda$ and
\item $\lambda\backslash\mu$ contains $k$ boxes of each color.
\end{itemize}
It follows that $\core(\mu)=\core(\lambda)$.

For $F\in\Lambda^{\otimes r}$, let $F^\perp$ denote the adjoint of multiplication by $F$ with respect to the Hall pairing $\langle -, - \rangle$.
Thus, if we write $F$ in terms of $\{p_n[X^{(i)}]\}$, $F^\perp$ is obtained by replace each $p_n[X^{(i)}]$ by $p_n^\perp[X^{(i)}]$.

\begin{lem}[\protect{\cite[Lemma 4.3]{RW}}]\label{SupportLem}
For $F\in\Lambda^{\otimes r}$ of degree $k$, we have
\begin{align*}
FH_\lambda&= \sum_{\lambda\subset_k\mu}c_{\mu\lambda}H_\mu\\
F^\perp H_\lambda&= \sum_{\mu\subset_k\lambda} c_{\mu\lambda}^*H_\mu.
\end{align*}
\end{lem}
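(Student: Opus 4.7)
The statement is the support/triangularity lemma for multiplication and skewing by elements of $\Lambda^{\otimes r}$ in the wreath Macdonald basis, labeled with a fixed $r$-core. The plan is to set up the two assertions in parallel, reduce by linearity to a single generator, and then exploit the quantum toroidal / Haiman description of the $H_\lambda$.

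First I would observe that the two claims are adjoint to each other: multiplication by $F$ and the skewing $F^\perp$ are Hall-adjoints, and the change of pairing to $\langle-,-\rangle'_{q,t}$ only introduces a fixed plethystic twist that preserves the core-quotient structure. So it suffices to prove the multiplication statement, and deduce the skewing one by pairing against $H_\mu^\dagger$ using Proposition~\ref{DaggerProp} and the characterization $H_\lambda^\dagger[X^\bullet;q,t]=H_\lambda[-\iota X^\bullet;q^{-1},t^{-1}]$, which swaps $\subset_k$ to $\supset_k$. Next, note the degree constraint: since $\deg H_\mu=|\quot(\mu)|$ and the expansion must stay within the basis $\{H_\mu:\core(\mu)=\core(\lambda)\}$, any $\mu$ appearing in $FH_\lambda$ satisfies $|\quot(\mu)|-|\quot(\lambda)|=k$, equivalently $|\mu|-|\lambda|=rk$. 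What remains is to upgrade this box count to the finer condition that $\mu\supset\lambda$ and that $\mu\setminus\lambda$ contains exactly $k$ boxes of each color.

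The core of the argument is to reduce to $F=p_n[X^{(i)}]$ by linearity (since $\Lambda^{\otimes r}$ is polynomial in these generators) and then invoke the quantum toroidal realization from \cite{WreathEigen}. There the operator of multiplication by $p_n[X^{(i)}]$ is identified with a vertical Heisenberg generator inside $\UTor$, acting on $\Lambda^{\otimes r}_{q,t}$ via the wreath Fock representation; its matrix coefficients in the $\{H_\mu\}$ basis are precisely the Pieri-type coefficients whose support consists of $\mu$ obtained from $\lambda$ by adding $n$ ribbons of length $r$, i.e.\ $\lambda\subset_n\mu$. Iterating this single-generator statement for a monomial $p_{n_1}[X^{(i_1)}]\cdots p_{n_m}[X^{(i_m)}]$ of total degree $k$, the supports compose and one lands on $\mu$ with $\lambda\subset_k\mu$, as desired. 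The skewing statement then follows by taking adjoints.

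The main obstacle is the single-generator case: one needs to know that multiplication by $p_n[X^{(i)}]$ in the $H$-basis is supported only on $\mu$ built from $\lambda$ by ribbon additions, without relying on explicit closed-form Pieri coefficients (which are genuinely complicated and still partly conjectural \cite{OSW,WreathOrth}). A self-contained alternative is to chase Haiman's two triangularity conditions simultaneously: expressing $H_\lambda$ via the substitutions $X^\bullet\mapsto(1-q\sigma^{-1})X^\bullet$ and $X^\bullet\mapsto(1-t^{-1}\sigma^{-1})X^\bullet$ gives upper- and lower-triangular expansions in the multi-Schur basis $\{s_{\quot(\mu)}\}$ with fixed core, and the tension between these two triangularities forces the support constraint after multiplying by $F$ and re-expanding. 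Either route ultimately reduces to the content of \cite[Lemma 4.3]{RW}.
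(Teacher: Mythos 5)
The paper does not actually prove Lemma~\ref{SupportLem}; it is imported verbatim from \cite[Lemma~4.3]{RW}, so there is no internal argument to compare yours against. Judged on its own terms, your outline has a sensible skeleton: reduce the skewing statement to the multiplication statement by adjointness (this works, though you should be explicit that $F^\perp$ is the \emph{Hall} adjoint while the dual basis $\{H_\mu^\dagger\}$ of Proposition~\ref{DaggerProp} is dual with respect to the \emph{modified} pairing, so a fixed invertible matrix plethysm must be carried through); reduce to a single generator $p_n[X^{(i)}]$ by linearity and multiplicativity; and observe that the degree count only pins down $|\quot(\mu)|-|\quot(\lambda)|=k$, so the entire content is the containment $\lambda\subseteq\mu$ (given containment and equality of cores, the ``$k$ boxes of each color'' condition is automatic). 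The problem is that the step carrying all the weight is asserted rather than proved. Declaring that multiplication by $p_n[X^{(i)}]$ ``is identified with a vertical Heisenberg generator whose matrix coefficients are supported on $\mu$ obtained from $\lambda$ by adding $n$ ribbons'' \emph{is} the single-generator case of the lemma. In \cite{WreathEigen} the operators that visibly add or remove ribbons do so in a different realization, and transporting such a support statement to the $\{H_\mu\}$ basis is exactly the nontrivial point; as written, this route is circular.

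Your fallback via the two triangularity conditions in Haiman's definition is the right idea (it is the wreath analogue of the classical Pieri-support argument for modified Macdonald polynomials), but ``the tension between these two triangularities forces the support constraint'' is a gesture, not an argument. To make it one, you would need to establish (i) how multiplication by a degree-$k$ element interacts with the expansions of $H_\lambda[(1-q\sigma^{-1})X^\bullet]$ and $H_\lambda[(1-t^{-1}\sigma^{-1})X^\bullet]$ in the multi-Schur basis, producing an upper and a lower dominance bound on the partitions $\mu$ that can occur, and (ii) the purely combinatorial fact that, for partitions with a common $r$-core and $|\mu|=|\lambda|+rk$, the two bounds so obtained force $\lambda\subseteq\mu$. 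Neither step is routine, and neither appears in your sketch; your own closing sentence concedes that both routes ``ultimately reduce to the content of \cite[Lemma~4.3]{RW}'', which is an admission that the lemma has been restated rather than proved.
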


\subsubsection{Skewing}
Let us define $h_n^\perp[X]\coloneq \left(h_n[X]\right)^\perp$.
Thus, we have
\[
\sum_{n=0}^\infty h_n^\perp [X]z^n=\TT[Xz].
\]
We define plethysm for $h_n^\perp[X]$ by transforming $p_k^\perp[X]$ as in \ref{OT}.

\begin{prop}\label{SPieri}
We have
\[
h_n^\perp\left[ (1-qt)X^{(0)} \right]H_\lambda= \sum_{\mu\subset_{n}\lambda}\frac{N_{\lambda,\mu}(1)}{N_{\mu,\mu}(1)}H_\mu.
\]
\end{prop}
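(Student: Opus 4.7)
The plan is to derive the Pieri rule by specializing $u = 1$ in Corollary \ref{WNek}. Setting $u = 1$ in the definition \eqref{WDef} of $\mathsf{W}(u)$ collapses the plethystic exponential to $\Omega[0] = 1$, leaving
\[
\mathsf{W}(1) = \TT\bigl[(1-qt)X^{(0)}\bigr] = \sum_{n \geq 0} h_n^\perp\bigl[(1-qt)X^{(0)}\bigr].
\]
The $n$-th summand lowers total degree by $n$, so $h_n^\perp\bigl[(1-qt)X^{(0)}\bigr] H_\lambda$ is precisely the homogeneous component of $\mathsf{W}(1) H_\lambda$ of degree $|\quot(\lambda)| - n$. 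Applying Lemma \ref{SupportLem} to the degree-$n$ element $h_n\bigl[(1-qt)X^{(0)}\bigr] \in \Lambda^{\otimes r}$ then expands this component as $\sum_{\mu \subset_n \lambda} c^*_{\mu\lambda} H_\mu$ for some coefficients $c^*_{\mu\lambda}$ to be determined.

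To extract these coefficients, I would pair against $H_\mu^\dagger$. The pairing $\langle -, -\rangle'_{q,t}$ is degree-preserving: the matrix plethysms $\iota$ and $(1-q\sigma^{-1})(t\sigma - 1)$ have scalar entries and hence preserve total degree, while the Hall pairing vanishes between Schur functions of different sizes. Consequently, setting $n = |\quot(\lambda)| - |\quot(\mu)|$, we obtain
\[
\bigl\langle H_\mu^\dagger,\, \mathsf{W}(1) H_\lambda\bigr\rangle'_{q,t} = \bigl\langle H_\mu^\dagger,\, h_n^\perp\bigl[(1-qt)X^{(0)}\bigr] H_\lambda\bigr\rangle'_{q,t}.
\]
Corollary \ref{WNek} at $u = 1$ evaluates the left-hand side to $N_{\lambda,\mu}(1)$, while Proposition \ref{DaggerProp} together with Corollary \ref{Norm} reduces the right-hand side to $c^*_{\mu\lambda}\, N_{\mu,\mu}(1)$.

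Dividing gives $c^*_{\mu\lambda} = N_{\lambda,\mu}(1)/N_{\mu,\mu}(1)$ as claimed. The division is legitimate since $N_{\mu,\mu}(1)$ is a nonzero rational function in $q, t$: every factor $(1 - q^{-a_\mu(\square)}t^{l_\mu(\square)+1})(1 - q^{a_\mu(\square)+1}t^{-l_\mu(\square)})$ involves a monomial whose $q$-exponent has strict sign because $a_\mu(\square), l_\mu(\square) \geq 0$ for boxes inside $\mu$. There is no genuine obstacle in this argument; the Pieri rule is essentially an immediate consequence of Corollary \ref{WNek} combined with the graded structure of $\langle -, -\rangle'_{q,t}$, and the only bookkeeping point is matching the homogeneous component of $\mathsf{W}(1) H_\lambda$ with the action of $h_n^\perp$ on $H_\lambda$.
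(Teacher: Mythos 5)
Your proposal is correct and follows essentially the same route as the paper: specialize $u=1$ in Corollary \ref{WNek} so that $\mathsf{W}(1)=\TT\bigl[(1-qt)X^{(0)}\bigr]=\sum_{n\ge 0}h_n^\perp\bigl[(1-qt)X^{(0)}\bigr]$, use Lemma \ref{SupportLem} to control the support, and extract coefficients via the dual-basis property from Proposition \ref{DaggerProp} and Corollary \ref{Norm}. The paper states this in two sentences; your write-up simply makes the degree bookkeeping and the nonvanishing of $N_{\mu,\mu}(1)$ explicit.
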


\begin{proof}
As in the proof of Corollary \ref{Norm}, we set $u=1$ in Corollary \ref{WNek}.
The partitions $\mu$ that appear are controlled by Lemma \ref{SupportLem}.
\end{proof}

\subsubsection{Multiplication}
When we set $u=(qt)^{-1}$ in (\ref{WDef}), we have
\[
\mathsf{W}(q^{-1}t^{-1})=\Omega\left[ \frac{(1-qt)X^{(0)}}{(1-q\sigma^{-1})(t\sigma-1)} \right].
\]
Combining this with Corollaries \ref{WNek} and \ref{Norm} and Lemma \ref{SupportLem} gives us the following.
\begin{prop}\label{MPieri}
We have
\[
e_n\left[ \frac{(1-qt)X^{(0)}}{(1-q\sigma^{-1})(1-t\sigma)} \right]H_\lambda=
\sum_{\lambda\subset_n\mu} (-1)^n(qt)^{|\quot(\mu)|}\frac{N_{\lambda,\mu}(q^{-1}t^{-1})}{N_{\mu,\mu}(1)}H_\mu.
\]
\end{prop}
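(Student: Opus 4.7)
The plan is to specialize $u = (qt)^{-1}$ in Corollary~\ref{WNek} and read off the coefficients, mirroring the argument of Corollary~\ref{Norm} and Proposition~\ref{SPieri} but at a different value of~$u$. First, by combining Proposition~\ref{DaggerProp} (the delta property of the dagger basis) with Corollary~\ref{Norm} (the diagonal values $\langle H_\mu^\dagger, H_\mu\rangle_{q,t}' = N_{\mu,\mu}(1)$), Corollary~\ref{WNek} can be promoted to the expansion
\[
\mathsf{W}(u)H_\lambda = \sum_{\substack{\mu\\ \core(\mu)=\core(\lambda)}} \frac{u^{-|\quot(\mu)|}\,N_{\lambda,\mu}(u)}{N_{\mu,\mu}(1)}\,H_\mu.
\]

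Next, I set $u = (qt)^{-1}$. At this value $1 - uqt = 0$, so $\TT[(1-uqt)X^{(0)}]$ is the identity and $\mathsf{W}(q^{-1}t^{-1})$ reduces to the plethystic exponential displayed immediately before the proposition. Writing $t\sigma - 1 = -(1-t\sigma)$ and invoking the elementary plethystic identity $\Omega[-Y] = \sum_{n\ge 0}(-1)^n e_n[Y]$ (which follows from $h_n[-Y] = (-1)^n e_n[Y]$), I decompose
\[
\mathsf{W}(q^{-1}t^{-1}) = \sum_{n\ge 0} (-1)^n\, e_n\!\left[\frac{(1-qt)X^{(0)}}{(1-q\sigma^{-1})(1-t\sigma)}\right].
\]

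Finally, each operator $e_n[\,\cdot\,]$ above has total degree $n$, so Lemma~\ref{SupportLem} tells me that $e_n[\,\cdot\,]H_\lambda$ is supported on $\{H_\mu : \lambda\subset_n\mu\}$; for a fixed $\mu$ with $\core(\mu)=\core(\lambda)$, only the single value $n = |\quot(\mu)| - |\quot(\lambda)|$ contributes. Matching the $H_\mu$-coefficients against those in the first display at $u=(qt)^{-1}$ and noting $u^{-|\quot(\mu)|} = (qt)^{|\quot(\mu)|}$ yields the stated formula.

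The argument is essentially assembly, so the only (mild) obstacle is bookkeeping: tracking the sign $(-1)^n$ produced by the $\Omega[-Y]$ expansion together with the factor $(qt)^{|\quot(\mu)|}$ from the $u$-specialization, and confirming that exactly one $n$ contributes per $\mu$ (guaranteed by the identity $|\quot(\mu)|-|\quot(\lambda)| = n$ whenever $\lambda \subset_n \mu$). No new input beyond Corollary~\ref{WNek}, Corollary~\ref{Norm}, and Lemma~\ref{SupportLem} is needed.
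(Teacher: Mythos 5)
Your proposal is correct and follows the paper's own route exactly: specialize $u=(qt)^{-1}$ in Corollary~\ref{WNek} so that the $\TT$-factor trivializes, rewrite $t\sigma-1=-(1-t\sigma)$ and expand the remaining plethystic exponential via $\Omega[-Y]=\sum_{n\ge 0}(-1)^n e_n[Y]$, then extract coefficients using Proposition~\ref{DaggerProp}, Corollary~\ref{Norm} and Lemma~\ref{SupportLem}. The sign and degree bookkeeping you spell out is precisely what the paper leaves implicit in its one-line proof.
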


\begin{rem}
Let us make two comments about both Propositions \ref{SPieri} and \ref{MPieri}:
\begin{enumerate}
\item At $n=1$, one obtains the Pieri rules (at color 0) computed in the appendix to \cite{RW5Term}.
A direct combinatorial comparison with \textit{loc.~cit.}\ would be an interesting exercise.
\item We used Lemma \ref{SupportLem} to constrain the partitions appearing in the outputs of both Pieri rules.
However, it would be interesting to conclude this without the lemma by looking at the zeros of $N_{\lambda,\mu}(1)$ and $N_{\lambda,\mu}(q^{-1}t^{-1})$.
\end{enumerate}
\end{rem}

\subsection{Nekrasov--Okounkov formulas}
We now set out to prove Theorem~\ref{Thm_WW} by taking the graded trace of the vertex operator $\mathsf{W}(u)$.

\subsubsection{Graded trace}
In this subsection, we will produce interesting sum-to-product formulas by taking graded traces of certain operators on $\Lambda_{q,t}^{\otimes r}$.

\begin{lem}
Let $T$ be a variable such that $|T|\ll 1$; we assume that the magnitude of $T$ is less than all other variables involved.
For matrix plethysms $A$ and $B$, we have
\begin{equation}
\mathrm{Tr}_{\Lambda_{q,t}^{\otimes r}}\left( \Omega\left[ BX^\bullet \right]\TT\left[ AX^\bullet \right]T^{D} \right)
=\frac{1}{(T;T)^r_\infty}\Omega\left[\frac{T}{1-T} \sum_{i,j=0}^{r-1}(A^TB)_{i,j} \right]
\label{WtTrace}
\end{equation}
\end{lem}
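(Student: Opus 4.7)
My plan is to reduce the trace to an elementary partition count by combining Lemma \ref{OTComm} with cyclicity of the trace and a geometric-series iteration. Let $F(B)$ denote the trace on the left-hand side of \eqref{WtTrace}, viewed as a function of the matrix plethysm $B$ with $A$ fixed. Since $p_n[X^{(k)}]$ has degree $n$, conjugation by $T^D$ gives $T^D \Omega[BX^\bullet] T^{-D} = \Omega[TBX^\bullet]$, where $TB$ denotes the scalar-plethystic rescaling. Cyclicity of the trace followed by this sliding yields
\[
F(B) = \mathrm{Tr}\bigl(\TT[AX^\bullet]\, T^D\, \Omega[BX^\bullet]\bigr) = \mathrm{Tr}\bigl(\TT[AX^\bullet]\, \Omega[TBX^\bullet]\, T^D\bigr).
\]

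Next, I would apply Lemma \ref{OTComm} for each pair $j, k \in \ZZ/r\ZZ$ to commute $\TT[AX^\bullet]$ past $\Omega[TBX^\bullet]$. Each commutation produces a central scalar factor $\Omega[T(A^TB)_{j,k}]$, so together they multiply to $\Omega[T\sum_{j,k}(A^TB)_{j,k}]$, which pulls out of the trace. This gives the recursion
\[
F(B) = \Omega\left[T \sum_{j,k} (A^TB)_{j,k}\right] \cdot F(TB).
\]
Iterating $N$ times and then letting $N \to \infty$,
\[
F(B) = \prod_{n \geq 1} \Omega\left[T^n \sum_{j,k} (A^TB)_{j,k}\right] \cdot F(0) = \Omega\left[\frac{T}{1-T} \sum_{j,k} (A^TB)_{j,k}\right] \cdot F(0),
\]
using the multiplicativity $\Omega[X+Y] = \Omega[X]\Omega[Y]$ and the geometric series $\sum_{n\geq 1} T^n = T/(1-T)$. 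The problem is thus reduced to computing $F(0) = \mathrm{Tr}(\TT[AX^\bullet] T^D)$.

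The remaining trace is elementary: $\TT[AX^\bullet] - 1$ is a sum of strictly degree-lowering skewing operators, so its diagonal entries in any homogeneous basis vanish. Only the constant term $1$ contributes, and since $r$-tuples of partitions index a homogeneous basis of $\Lambda_{q,t}^{\otimes r}$,
\[
F(0) = \mathrm{Tr}(T^D) = \sum_{\vec{\lambda}} T^{|\vec{\lambda}|} = \frac{1}{(T;T)_\infty^r}.
\]
Combining these pieces yields the formula. The main technical point is convergence of the infinite iteration: one needs $F(T^N B) \to F(0)$ and the infinite product of scalar factors to make sense. Both are controlled by the hypothesis that $|T|$ is smaller than the other variables, which provides geometric decay at each iteration in the degree completion of $\Lambda_{q,t}^{\otimes r}$.
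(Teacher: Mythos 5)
Your proof is correct and follows essentially the same route as the paper: cyclicity of the trace plus conjugation by $T^D$, Lemma \ref{OTComm} to extract the scalar factor $\Omega\big[T\sum_{j,k}(A^TB)_{j,k}\big]$, iteration of the resulting recursion into a geometric series, and the base case $1/(T;T)_\infty^r$ from the fact that the remaining operator differs from the identity by strictly degree-shifting terms. The only (cosmetic) difference is that you run the recursion in $B\mapsto TB$ and evaluate the base case at $B=0$, whereas the paper introduces an auxiliary parameter $z$ on the $\TT$ factor and sets $z=0$.
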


\begin{proof}
This result is standard, but we give a proof nonetheless.
Let us introduce a parameter $z$ and consider
\[
\mathrm{Tr}(z)\coloneq\mathrm{Tr}_{\Lambda_{q,t}^{\otimes r}}\left( \Omega\left[BX^\bullet \right]\TT\left[ zAX^\bullet \right]T^{D} \right).
\]
By the cyclicity of the trace and Lemma \ref{OTComm}, we have
\begin{align*}
\mathrm{Tr}(z)&=\mathrm{Tr}_{\Lambda_{q,t}^{\otimes r}}\left(\TT\left[TzAX^\bullet \right] \Omega\left[BX^\bullet \right]T^{D} \right)\\
&= \Omega\left[ Tz\sum_{i,j=0}^{r-1}(A^TB)_{i,j} \right]\mathrm{Tr}(Tz).
\end{align*}
This gives a recursion.
Setting $z=0$, we have
\[
\mathrm{Tr}(0)=\frac{1}{\left( T;T \right)^r_\infty}.
\]
Solving the recursion gives us
\[
\mathrm{Tr}(z)=\frac{1}{\left( T;T \right)^r_\infty}
\Omega\left[\frac{Tz}{1-T} \sum_{i,j=0}^{r-1}(A^TB)_{i,j} \right]
\]
and we obtain \eqref{WtTrace} by setting $z=1$. 
\end{proof}

\subsubsection{The Walsh--Warnaar conjecture}
With the previous lemma under our belts we are ready to prove the Walsh--Warnaar conjecture, our Theorem~\ref{Thm_WW}.

\begin{proof}[Proof of Theorem~\ref{Thm_WW}]
We will prove the theorem with $t$ inverted.
First, note that the left-hand side of \eqref{WW1} is $\mathrm{Tr}_{\Lambda_{q,t}^{\otimes r}}\left(\mathsf{W}(u)T^D\right)$ with respect to the basis $\{H_\lambda\,|\,\core(\lambda)=\alpha\}$.
To see that, we combine Corollaries \ref{WNek} and \ref{Norm} to obtain
\[
\mathsf{W}(u)T^DH_\lambda=\sum_{\substack{\mu\\\core(\mu)=\alpha}}T^{|\quot(\lambda)|}u^{-|\quot(\mu)|}\frac{N_{\lambda,\mu}(u)}{N_{\mu,\mu}(1)}H_\mu.
\]
At $\mu=\lambda$, we can use Proposition \ref{QuotHook} to write the matrix element as in \eqref{WW1} with $t$ inverted.

Applying \eqref{WtTrace}, we obtain
\begin{align}
\nonumber
\mathrm{Tr}_{\Lambda_{q,t}^{\otimes r}}\left(\mathsf{W}(u)T^D\right)
&=
\frac{1}{\left( T;T \right)^r_\infty}\Omega\left[-\frac{T(1-u^{-1})(1-uqt)}{1-T}\left( \frac{1}{(1-q\sigma^{-1})(1-t\sigma)} \right)_{0,0} \right]\\
\label{WW3}
&=\frac{1}{\left( T;T \right)^r_\infty}\Omega\left[-\frac{T(1+qt-u^{-1}-uqt)}{(1-T)(1-q^r)(1-t^r)}\sum_{i=0}^{r-1} q^it^i\right].
\end{align}
In the second equality, we note that we have already computed the matrix element in the proof of Lemma \ref{VNek}.
Assuming $|T|$ to be very small, we can perform the following manipulations:
\begin{align*}
&\Omega\left[-\frac{T(1+qt-u^{-1}-uqt)}{(1-T)(1-q^r)(1-t^r)}\sum_{i=0}^{r-1} q^it^i\right]\\
&=
\Omega\left[\frac{T(1+qt-u^{-1}-uqt)}{(1-T)(1-q^r)(1-t^{-r})}\sum_{i=0}^{r-1} q^it^{-r+i}\right]\\
&=
\Omega\left[\frac{T}{(1-T)(1-q^r)(1-t^{-r})}\left((1-u)\sum_{i=1}^{r} q^it^{-r+i}+(1-u^{-1})\sum_{i=1}^{r} q^{r-i}t^{-i}\right)\right]\\
&= 
\prod_{i=1}^r\frac{\left(uq^it^{-r+i}T, u^{-1}q^{r-i}t^{-i}T; q^r, t^{-r}, T  \right)_\infty}{\left( q^it^{-r+i}T, q^{r-i}t^{-i}T; q^r,t^{-r}, T \right)_\infty}.
\end{align*}
Plugging this into \eqref{WW3}, we obtain \eqref{WW1} with $t$ inverted.
\end{proof}

\subsubsection{Elliptic Nekrasov--Okounkov formulas}
Walsh and Warnaar make a further conjecture regarding an elliptic analogue of the modular $(q,t)$-Nekrasov--Okounkov formula.
For completeness, we discuss this here.

As pointed out by Rains and Warnaar \cite[Appendix~A]{RainsWar16}, the equivariant Dijkgraaf--Moore--Verlinde--Verlinde (DMVV) formula, originally conjectured by Li, Liu and Zhou \cite{LLZ06} and proved by Waelder \cite{Waelder08}, can be viewed as an elliptic analogue of the $(q,t)$-Nekrasov--Okounkov formula \eqref{Eq_qtNO}.
To state this, let $\Ell(\Hilb_n(\mathbb{C}^2);u,p,t_1,t_2)$ denote the equivariant elliptic genus of the Hilbert scheme, where for our purposes $(u,p,t_1,t_2)$ are regarded as formal variables.
With the aid of the modified theta function $\theta(z;p)\coloneq(z,p/z;p)_\infty$, for which we adopt the multiplicative notation
\[
\theta(z_1,\dots,z_k;p)\coloneq \theta(z_1;p)\cdots\theta(z_k;p),
\]
the equivariant elliptic genus of $\mathbb{C}^2$ is given by
\begin{align*}
\mathrm{Ell}(\mathbb{C}^2;u,p,t_1,t_2)
&=\frac{\theta(ut_1^{-1},u^{-1}t_2;p)_\infty}{\theta(t_1^{-1},t_2;p)}
=\sum_{m\ge 0}\sum_{\ell,n_1,n_2\in\mathbb{Z}}c(m,\ell,n_1,n_2)p^m u^\ell t_1^{n_1}t_2^{n_2}.
\end{align*}
The symmetry in $(t_1,t_2)$ can be seen from the symmetry of the theta function: $\theta(z;p)=-z\theta(1/z;p)$.
Using the integers $c(m,k,n_1,n_2)$ the equivariant DMVV formula then states that
\begin{equation}\label{EqDMVV}
\sum_{n\ge 0}T^n\Ell\!\big(\Hilb_n(\mathbb{C}^2);u,p,t_1,t_2\big)
=\prod_{m\ge 0}\prod_{k\ge 1}\prod_{\ell,n_1,n_2\in\mathbb{Z}}
\frac{1}{(1-p^mT^ku^\ell t_1^{n_1}t_2^{n_2})^{c(km,\ell,n_1,n_2)}}.
\end{equation}
By localization the generating function for elliptic genera of the Hilbert scheme may be alternatively expressed as
\[
\sum_{n\ge 0}T^n\Ell\!\big(\Hilb_n(\mathbb{C}^2);u,p,t_1,t_2\big)
=\sum_{\lambda}T^{|\lambda|}
\prod_{\square\in\lambda}\frac{\theta(ut_1^{-a_\lambda(\square)-1}t_2^{l_\lambda(\square)},u^{-1}t_1^{-a_\lambda(\square)}t_2^{l_\lambda(\square)+1};p)}
{\theta(t_1^{-a_\lambda(\square)-1}t_2^{l_\lambda(\square)},t_1^{-a_\lambda(\square)}t_2^{l_\lambda(\square)+1};p)}.
\]
Thus, \eqref{EqDMVV} can be viewed as an elliptic lift of \eqref{Eq_qtNO}.
Indeed, the content of \cite[Appendix~A]{RainsWar16} is to show that an appropriate rewriting of the product side of \eqref{EqDMVV} gives an expression which manifestly reduces to the $(q,t)$-Nekrasov--Okounkov formula in the $p\to 0$ limit (and upon setting $(t_1,t_2)\mapsto(q^{-1},t)$).

Motivated by this and their modular conjecture in the $(q,t)$-case, Walsh and Warnaar further conjectured that for each $r$-core $\alpha$ the sum
\begin{equation}\label{WWtheta}
\sum_{\substack{\lambda\\\core(\lambda)=\alpha}}T^{|\quot(\lambda)|}
\prod_{\substack{\square\in\lambda\\h_\lambda(\square)\equiv0\;\mathrm{mod}\;r}}\frac{\theta(ut_1^{-a_\lambda(\square)-1}t_2^{l_\lambda(\square)},u^{-1}t_1^{-a_\lambda(\square)}t_2^{l_\lambda(\square)+1};p)}
{\theta(t_1^{-a_\lambda(\square)-1}t_2^{l_\lambda(\square)},t_1^{-a_\lambda(\square)}t_2^{l_\lambda(\square)+1};p)}
\end{equation}
is independent of $\alpha$.
They were able to prove this conjecture for the coefficient of $T^n$ with $0\le n\le 2$, the last case requiring heavy use of addition formulas for theta functions.
Replacing $(t_1,t_2)\mapsto(q^{-1},t)$ and taking the $p\to0$ limit recovers the sum-side of Theorem~\ref{Thm_WW}. Thus, our result provides further evidence for this conjecture.
It is also an open problem to determine a ratio of modified theta functions which governs the expansion of the sum \eqref{WWtheta} as an infinite product, as in \eqref{EqDMVV}.

\subsection*{Acknowledgments}
We would like to thank
Noah Arbesfeld,
Anton Mellit,
Marino Romero and 
Ole Warnaar for helpful conversations.
S.~Albion Ferlinc was partially supported by the Austrian Science Fund (FWF) 10.55776/F1002 in the framework of the Special Research Program ``Discrete Random Structures: Enumeration and Scaling Limits''.
J.~J.~Wen was supported by ERC consolidator grant No.~101001159 ``Refined invariants in combinatorics, low-dimensional topology and geometry of moduli spaces''.

\bibliographystyle{alpha}
\bibliography{Wreath}

\end{document}